\documentclass[10pt,reqno]{amsart}
\usepackage{amssymb, latexsym}
\usepackage{xcolor}
\usepackage{amscd}
\usepackage{amssymb}
\usepackage{latexsym}

\usepackage{amsmath}
\usepackage{amscd}
\usepackage{cite}
\usepackage{color}
\usepackage{enumerate}
\usepackage{amsfonts}
\usepackage{graphicx}
\usepackage{mathrsfs}
\usepackage{setspace}
\usepackage{hyperref}

\usepackage{pgfplots,pgfplotstable,placeins}
\usepgfplotslibrary{groupplots,fillbetween}
\pgfplotsset{compat=1.18}

\pdfmapfile{+cm.map}
\pdfmapfile{+cmextra.map}
\pdfmapfile{+symbols.map}
\pdfmapfile{+latxfont.map}

\def\bi{\begin{itemize}}
\def\bs{\begin{split}}
\def\es{\end{split}}
\def\ba{\begin{align}}
\def\bas{\begin{align*}}
\def\ea{\end{align}}
\def\eas{\end{align*}}
\def\Im{{\operatorname{Im}}}

\def\C{{\mathbb C}} % Jim changed to mathbb
\def\R{{{\mathbb R}}}
\def\Z{{{\mathbb Z}}}

\def\emph#1{{\it #1}}
\def\textbf#1{{\bf #1}}

\newcommand{\lnr}{\langle\nabla\rangle}

\newcommand{\hk}{{\dot H^k}}

\newcommand{\eps}{{\varepsilon}}

\newtheorem{theorem}{Theorem}[section]

\newtheorem{lemma}[theorem]{Lemma}

\newtheorem{proposition}[theorem]{Proposition}

\theoremstyle{definition}
\newtheorem{definition}[theorem]{Definition}

\numberwithin{equation}{section} \numberwithin{theorem}{section}

\begin{document}

\title[GWP and scattering for NLS]
{Global well-posedness and scattering for the defocusing nonlinear Schr\"odinger equation below energy space}
\author[Z. Ma]{Zuyu Ma}
	\address{Zuyu Ma
		\newline \indent Institute of Applied Physics and Computational Mathematics, Beijing, 100088, China}
	\email{mazuyu23@gscaep.ac.cn}
    
 \author[C. Miao]{Changxing Miao}
\address{Changxing Miao 
\newline \indent School of Mathematics and Physics, University of Science and Technology Beijing, Beijing 100083, China}
\email{miao\_changxing@ustb.edu.cn, miao\_changxing@iapcm.ac.cn}

\author[Y. Song]{Yilin Song}
	\address{Yilin Song
		\newline \indent Institute of Applied Physics and Computational Mathematics, Beijing, 100088, China}
	\email{songyilin21@gscaep.ac.cn}

	\author[J. Zheng]{Jiqiang Zheng}
	\address{Jiqiang Zheng
		\newline Institute of Applied Physics and Computational Mathematics, Beijing, 100088, China.
		\newline National Key Laboratory of Computational Physics, Beijing, 100088, China}
	\email{zheng\_jiqiang@iapcm.ac.cn, zhengjiqiang@gmail.com}	
	
\subjclass[2020]{35Q55} \keywords{Nonlinear Schr\"odinger equation, well-posedness, scattering, low regularity}

\begin{abstract}
In this paper, we prove global well-posedness and scattering below the energy
space for the defocusing nonlinear Schr\"odinger equation with intercritical power
nonlinearities in spatial dimensions $d\geq3$, improving the regularity threshold
of Vi\c{s}an and Zhang[Differ. Integr. Equations,  22(2009), 99-124.].
\end{abstract}

\maketitle

\section{Introduction}

We consider the initial-value problem for the defocusing nonlinear
Schr\"odinger equation
\begin{equation}\label{equation}\tag{NLS}
\begin{cases}
i u_t+\Delta u=|u|^p u,\\
u(0,x)=u_0(x)\in H^s(\R^d),
\end{cases}
\end{equation}
where $d\ge3$ and $\frac4d<p<\frac4{d-2}$.
The equation \eqref{equation} is invariant under the scaling tranformation
\begin{equation}\label{scaling}
u(t,x)\mapsto u_\lambda(t,x)
:=\lambda^{-\frac2p}u\Bigl(\frac{t}{\lambda^2},\frac{x}{\lambda}\Bigr),
\qquad \lambda>0.
\end{equation}
It leaves the $\dot H^{s_c}(\R^d)$-norm invariant, where
$s_c:=\frac d2-\frac2p\in(0,1)$ is the scaling-critical regularity.
Thus \eqref{equation} is mass-supercritical and energy-subcritical in our setting.
For solutions in $H^1(\R^d)$, the  mass and energy are conserved:
\begin{align}
M(u(t))&=\int_{\R^d}|u(t,x)|^2\,dx,\label{mass}\\
E(u(t))&=\int_{\R^d}
\left[\frac12|\nabla u(t,x)|^2
+\frac1{p+2}|u(t,x)|^{p+2}\right]dx.\label{energy}
\end{align}

\subsection{Prior work}\label{subsec:prior-work}

The local well-posedness theory for \eqref{equation} in $H^s(\R^d)$ with
$s_c<s\le1$ was established by Cazenave-Weissler \cite{cwI} and we refer to \cite{cazenave:book} for more details.
In this intercritical range, the lifespan of the local solution depends only on the
$H^s$-norm of the initial data. At the energy level $s=1$, by the local well-posedness and conservation of mass and
energy gives the global
well-posedness in $H^1(\R^d)$.
Ginibre and Velo \cite{gv:scatter} further proved the $H^1$ scattering in  the sense that for every $u_0\in H^1(\R^d)$, there exist
$u_\pm\in H^1(\R^d)$ such that
\begin{equation}\label{intro energy scattering}
\lim_{t\to\pm\infty}
\|u(t)-e^{it\Delta}u_\pm\|_{H^1(\R^d)}=0.
\end{equation}
Here $e^{it\Delta}$ denotes the free Schr\"odinger propagator. This property implies that the
nonlinear solution is asymptotically closed to a linear scattering state in $H^1$ for both time directions.

For $s_c<s<1$, the lack of conservation laws at the inter-critical regularity makes it difficult to obtain the global well-posedness below $H^1$.
For the three-dimensional cubic equation, Bourgain \cite{Blowscatter}
developed the high--low frequency decomposition to prove global
well-posedness in $H^s(\R^3)$ for $s>\frac{11}{13}$.
Colliander, Keel, Staffilani, Takaoka, and Tao subsequently introduced
the $I$-method \cite{ckstt:low1} and show the global well-posedness by the almost conservation law. The key contribution is that the $I$-operator can map $H^s\to H^1$. Therefore, they only need to control the energy increment $E(Iu)-E(Iu_0)$ since the modified equation $Iu$ is not a Hamilton system.
Combining this method with the interaction
Morawetz inequality, they proved global well-posedness and scattering
for $s>\frac45$ in \cite{ckstt:low}.

For the same equation, Dodson \cite{Dodson-DCDS} obtained global
well-posedness and scattering for $s>\frac57$ using a
linear--nonlinear decomposition.
Su \cite{Su} improved this to $s>\frac{49}{74}$ by combining
linear--nonlinear decomposition, resonance decomposition \cite{CKSTT-DCDS}, and
large time iteration.
For radial initial data, Dodson \cite{Dodson-Camb} proved global
well-posedness and scattering for every $s>\frac12$ using
long-time Strichartz estimates. Recently, Su and Zhao \cite{SZ} remove the radial assumption of \cite{Dodson-Camb} by combining the I-method with improved long-time bilinear $L^2_{t,x}$ estimates for frequency-localized components of the solution.

For the non-algebraic nonlinearity, it is very difficult to use $I$-method directly. Vi\c{s}an and Zhang \cite{VZ07} treated general power-type
nonlinearities by
 combining the $I$-method with the interaction Morawetz inequality. The main ingredient is the nonlinear commutator estimate which was proved using the Bony paraproduct decomposition. 

\subsection{Main result}\label{subsec:main-result}

In this paper, our goal is to obtain  a lower regularity threshold for \eqref{equation} compared to \cite{VZ07}.
To state our result, we introduce several notations.
\begin{equation}\label{new exponents}
\nu:=\min\{1,p\},\qquad
\delta:=\nu(1-s_c),\qquad
A_d(\sigma):=d-3-\sigma(d-6).
\end{equation}
Let $\rho_1(d,p)$ be the unique root in $(s_c,1)$ of
\begin{equation}\label{rho definition}
\delta\rho(\rho-s_c)=s_c(1-\rho)A_d(\rho),
\end{equation}
and set
\begin{equation}\label{optimized threshold}
\rho_2(d,p):=\frac{d-2+\nu s_c}{d-2+\nu},\qquad
s_*(d,p):=\max\{\rho_1(d,p),\rho_2(d,p)\}.
\end{equation}
Then $\max\{s_c,\frac12\}<s_*(d,p)<1$.\footnote{On
$(s_c,1)$, the function $\delta(\rho-s_c)/(1-\rho)$ increases strictly
from zero to infinity, whereas
$s_cA_d(\rho)/\rho=s_c\{(d-3)/\rho-(d-6)\}$ is positive and
nonincreasing. This proves the existence and uniqueness of $\rho_1$;
the remaining inequalities follow from the formula for $\rho_2$.}

\begin{theorem}\label{scattering}
Let $d\ge3$, $\frac4d<p<\frac4{d-2}$, and $u_0\in H^s(\R^d)$, where
\begin{equation}\label{main threshold}
 s_*(d,p)<s<1.
\end{equation}
Then \eqref{equation} is globally well-posed in $H^s(\R^d)$, and
its global solution $u$ satisfies
\[
\|u\|_{L_t^\infty H_x^s(\R\times\R^d)}
\le C\bigl(\|u_0\|_{H^s(\R^d)}\bigr).
\]
Moreover, there exist unique scattering states $u_\pm\in H^s(\R^d)$
such that
\[
\lim_{t\to\pm\infty}
\|u(t)-e^{it\Delta}u_\pm\|_{H^s(\R^d)}=0.
\]
\end{theorem}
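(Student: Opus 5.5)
The plan is to run the $I$-method together with the interaction Morawetz inequality, in the spirit of Vi\c{s}an--Zhang \cite{VZ07}, and to sharpen the almost conservation step so that the final bookkeeping produces the threshold $s_*(d,p)$.

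\textbf{Setup.} Fix $N\gg1$ and let $I=I_N$ be the Fourier multiplier with symbol $m(\xi)=1$ for $|\xi|\le N$ and $m(\xi)=(N/|\xi|)^{1-s}$ for $|\xi|\ge 2N$. Then
\[
\|Iu_0\|_{\dot H^1}\lesssim N^{1-s}\|u_0\|_{H^s},\qquad E(Iu_0)\lesssim N^{2(1-s)}.
\]
Using the scaling \eqref{scaling} with $\lambda\sim N^{\frac{1-s}{s-s_c}}$, we normalize so that $E(Iu_{0,\lambda})\le\frac12$. The mass is not scale invariant: $M(u_\lambda)=\lambda^{2s_c}M(u_0)$.

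\textbf{Modified local theory.} This step follows \cite{VZ07}. On any interval $J$ where
\[
\|\lnr Iu\|_{L^q_tL^r_x(J)}\ \text{(an admissible Strichartz norm)}\ \lesssim 1,
\]
I would prove the almost conservation law
\[
|E(Iu(t))-E(Iu(t_0))|\lesssim N^{-\alpha}.
\]
The key input is the nonlinear commutator estimate
\[
\|\nabla\bigl(I(|u|^pu)-|Iu|^pIu\bigr)\|_{L^2_tL^{2d/(d+2)}_x}\lesssim N^{-\alpha}\|\lnr Iu\|_{S}^{1+p}.
\]
Here I would aim for the improved decay $\alpha=\delta=\nu(1-s_c)$ rather than the exponent in \cite{VZ07}. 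The idea is to use the H\"older regularity $\nu=\min\{1,p\}$ of $z\mapsto|z|^p z$ through the fractional chain rule, placing derivatives at the critical level $s_c$ rather than $1$. The low-frequency piece $P_{\le N}u$ is handled as in \cite{VZ07} by paraproducts. The high-frequency piece gains $N^{-(1-s_c)}$ per factor with H\"older exponent $\nu$.

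\textbf{Morawetz and iteration.} Next I would control the interaction Morawetz quantity for $Iu$. For $d\ge4$ this is $\||\nabla|^{-\frac{d-3}{4}}Iu\|_{L^4_{t,x}}^4$; for $d=3$ it is $\|Iu\|_{L^4}^4$. The bound is
\[
\lesssim \|Iu\|_{L^\infty L^2}^2\,\|Iu\|_{L^\infty \dot H^{1/2}}^2+\text{error},
\]
where the error comes from $I(|u|^pu)-|Iu|^pIu$. Interpolating between mass and energy, this is $\lesssim \lambda^{cs_c}$ times powers controlled by $E(Iu)$, and the high-frequency error is bounded by $N^{-\delta}$ times Strichartz norms. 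I would split the Morawetz norm into $O(\lambda^{\gamma})$ intervals with small norm, where $\gamma$ involves $A_d(s)$ (coming from interpolating $\dot H^{-(d-3)/4}$-type norms with mass scaling). On each interval, local theory gives the Strichartz bound and the energy increment is $N^{-\delta}$. The bootstrap closes provided the number of intervals times $N^{-\delta}$ is $o(1)$. Writing this condition in terms of $N$ via $\lambda$ gives exactly $\delta s(s-s_c)>s_c(1-s)A_d(s)$, i.e.\ $s>\rho_1$. The separate constraint $s>\rho_2$ arises from requiring the commutator error in the Morawetz estimate (estimated in $\dot H^{1/2}$-type norms, where the gain is $N^{-\nu(1-s)}$ against a loss from the $\dot H^{s_c}$ level) to be small; it is also what is needed for $s>1/2$.

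\textbf{Conclusion.} Choosing $N$ large closes the bootstrap on all of $\R$. This gives a global spacetime bound $\|u\|_{L^{2(d+2)/(d-2)}}$-type via Strichartz, which yields the uniform $H^s$ bound since $\|u\|_{H^s}\lesssim\|Iu\|_{H^1}$. Scattering then follows by the standard Cook argument in $H^s$.

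I expect the main obstacle to be the commutator estimate with the sharp gain $N^{-\nu(1-s_c)}$ for nonsmooth $p<1$, together with matching the exponent bookkeeping in the interval count so that it reproduces \eqref{rho definition} exactly.
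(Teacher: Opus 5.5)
Your overall architecture (rescaling, $I$-method energy increment, interaction Morawetz, bootstrap, Cook's argument) matches the paper's. The gap is in the step that actually carries the theorem: the commutator bound with gain $N^{-\nu(1-s_c)}$ is asserted without a mechanism. The tools you name, the classical multiplier with Vi\c{s}an--Zhang paraproducts, give only the weaker gain $N^{-\nu(s-s_c)}$, which is what their root $s_+$ reflects. Expand $u=u_{lo}+u_{hi}$ in frequency space. The term linear in $u_{hi}$, namely $\nabla\{I(u_{hi}\cdot F'(u_{lo}))-(Iu_{hi})\cdot F'(u_{lo})\}$, contains only one high-frequency factor, so there is no ``$N^{-(1-s_c)}$ per factor with H\"older exponent $\nu$.'' Any gain must come from the commutator $[I,F'(u_{lo})]$, i.e.\ from the regularity of $F'(u)$, which is limited: for $p<1$, $F''\sim|z|^{p-1}$ is singular. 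The remainder terms also force you to put $\nabla I$ on nonsmooth functions of $u$.

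The missing idea is the paper's operator $I_N=(1-\vartheta)\sum_j\vartheta^jS_{2^jN}$, combined with a Taylor expansion in physical space around local averages. Because $\int K_M=1$ and $\int\nabla K_M=0$, one has exactly $S_MF(u)-F(S_Mu)=\int K_M(y)R(S_Mu,h_y)\,dy$ and $\nabla[S_MF(u)-F(S_Mu)]=\int\nabla K_M(y)R(S_Mu,h_y)\,dy$, with $h_y=u(\cdot-y)-S_Mu$. The linear term cancels identically, and the gradient lands on the kernel rather than on $F'$. Then $|R(z,w)|\lesssim(|z|+|w|)^{p-\nu}|w|^{1+\nu}$ and $\|h_y\|_X\lesssim N^{-(1-s_c)}2^{-(s-s_c)j}(1+N_j|y|)Z_I$ give the gain scale by scale. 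The leftover $\sum_jc_jF(S_{N_j}u)-F(I_Nu)$ is telescoped through the recursion $I_N=(1-\vartheta)S_N+\vartheta I_{2N}$ into terms $B_j$ of order $1+\nu$ in $(S_{N_j}-I_{2N_j})u$. The weighted sums converge like $\sum_j2^{-\nu(s-s_c)j}$.

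The bookkeeping also needs repair. After rescaling, $\|u^\lambda\|_{L^2}=\lambda^{s_c}\|u_0\|_{L^2}$. So neither your hypothesis $\|\lnr Iu\|_S\lesssim1$ nor a commutator bound $N^{-\alpha}\|\lnr Iu\|_S^{1+p}$ is usable: the low-frequency part of that inhomogeneous norm is controlled only by the mass, of size $\lambda^{s_c}\|u_0\|_{L^2}$. The paper's bound is homogeneous, $N^{-\delta}\|u\|_X^{p-\nu}Z_I(J)^{1+\nu}$, with $Z_I$ built from $\nabla I_Nu$ alone and $X=L_t^{2p(2+\varepsilon)/\varepsilon}L_x^{dp(2+\varepsilon)/(4+\varepsilon)}$. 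Smallness of $\|u\|_X$ comes from the Morawetz norm via Lemma~\ref{interpolation1}. When $s_c<\frac{\sigma(d-2)}{d-3+4\sigma}$, that interpolation uses $L_t^\infty L_x^2$, so the smallness threshold must shrink with the mass: $\eta_\lambda=\eta(1+\lambda^{s_c}\|u_0\|_{L^2})^{-b/s_c}$. The number of intervals is then $\lambda^{\max\{s_cA_d(\sigma)/\sigma,\,(d-2)(1-s_c)\}}$, not merely a power involving $A_d(s)$. With $\sigma=s$ and $\lambda\sim N^{(1-s)/(s-s_c)}$, the first exponent gives $s>\rho_1$ and the second gives $s>\rho_2$. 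So $\rho_2$ comes from mass growth under scaling, not from a Morawetz commutator error, and your heuristic does not produce the formula $\frac{d-2+\nu s_c}{d-2+\nu}$.

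Finally, you do not need interaction Morawetz for $Iu$, whose error terms you would have to sum over all intervals. Since $s>\frac12$, the paper applies it to $u$ itself and interpolates with $L_t^\infty\dot H_x^\sigma$ to get \eqref{inter Mora esti}. It then bounds $\|u^\lambda\|_{\dot H^{1/2}}$ and $\|u^\lambda\|_{\dot H^\sigma}$ by the mass and $\|\nabla I_Nu^\lambda\|_{L^2}$.
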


We compare this result with the regularity threshold of
Vi\c{s}an and Zhang \cite{VZ07}. In our notation, let
\[
s_1(d,p):=\frac{dp}{2(p+2)},\qquad
s_2(d,p):=\frac{1+\nu s_c}{1+\nu},
\]
and, for $0<\sigma<1$, let $s_+(d,p,\sigma)$ denote the root in
$(s_c,1)$ of
\[
s_c(1-s)A_d(\sigma)=\nu\sigma(s-s_c)^2.
\]
For $d\ge4$, taking the infimum over the admissible choices of
$\sigma$ gives
\begin{equation}\label{intro VZ optimized}
\begin{split}
s_{\mathrm{VZ}}(d,p):=\inf\bigl\{s\in(s_c,1):\;&
\text{there exists }0<\sigma\le s\text{ such that}\\
&s>\max\{s_1(d,p),s_2(d,p),s_+(d,p,\sigma)\},\\
&2\sigma[8-p(d+2)]<(d-3)(dp-4)\bigr\}.
\end{split}
\end{equation}
In dimension three, the explicit threshold in
\cite[Remarks following (1.4)]{VZ07} is
\begin{equation}\label{intro VZ three}
s_{\mathrm{VZ}}(3,p)
=\max\left\{\frac{3p}{2(p+2)},
\frac{-s_c+\sqrt{12s_c-3s_c^2}}2\right\}.
\end{equation}
After some tedious but straightforward calculation, one can verify that
these thresholds satisfy
\[
s_*(d,p)<s_{\mathrm{VZ}}(d,p),
\qquad d\ge3,\qquad \frac4d<p<\frac4{d-2}.
\]
Thus Theorem~\ref{scattering} improved the result of
\cite{VZ07} to the range $s_*(d,p)<s\le s_{\mathrm{VZ}}(d,p)$.
Figure~\ref{fig:threshold-comparison} compares the thresholds
in dimensions $3,4,5,6$.

\begingroup
\pgfplotstableread{
p new old
1.333333333 0.5 0.6
1.357777778 0.513502455 0.606551952
1.382222222 0.526527331 0.613009198
1.406666667 0.539099526 0.619373777
1.431111111 0.551242236 0.625647668
1.455555556 0.562977099 0.631832797
1.482222222 0.575337331 0.638481174
1.533333333 0.597826087 0.650943396
1.535555556 0.598769899 0.651798495
1.551111111 0.60530086 0.668353032
1.566666667 0.611702128 0.683772995
1.582222222 0.617977528 0.698177529
1.6 0.625 0.713525492
1.617777778 0.645110285 0.727806911
1.635555556 0.663860596 0.741130444
1.653333333 0.681369322 0.753589143
1.671111111 0.697742004 0.765263382
1.691111111 0.714920449 0.777545991
1.711111111 0.730899959 0.789010733
1.731111111 0.745786772 0.799732877
1.753333333 0.761160789 0.810855203
1.775555556 0.775417472 0.82122074
1.797777778 0.788658709 0.830898404
1.822222222 0.802158866 0.840821368
1.846666667 0.814646167 0.850055852
1.864444444 0.82314586 0.856375145
1.882222222 0.831192436 0.862384653
1.9 0.838815789 0.868103949
1.92 0.84692029 0.874213556
1.94 0.854559662 0.880001926
1.96 0.861766582 0.885490384
1.98 0.868570913 0.890698428
2.002222222 0.875692289 0.896177943
2.046666667 0.888678966 0.906252125
2.093333333 0.900725299 0.915700078
2.142222222 0.91184389 0.924518466
2.193333333 0.922058337 0.932711871
2.224444444 0.927658435 0.937244881
2.255555556 0.932840904 0.94146768
2.288888889 0.93797079 0.945675623
2.322222222 0.942701842 0.949582624
2.393333333 0.951619164 0.957020883
2.468888889 0.959597993 0.963767295
2.548888889 0.966664254 0.969823554
2.635555556 0.973006807 0.975333884
2.728888889 0.978596041 0.980256682
2.828888889 0.983431888 0.984574567
2.935555556 0.987536785 0.988289785
3.048888889 0.990949293 0.991419781
3.173333333 0.993807032 0.994075889
3.308888889 0.996092576 0.99622859
3.457777778 0.997837805 0.997894313
3.62 0.999048437 0.999065085
3.8 0.999765038 0.999767099
4 1 1
}\VZplotDataA
\pgfplotstableread{
p new old
1 0.666666667 0.732050808
1.026666667 0.683982684 0.745970246
1.054166667 0.700922266 0.759587088
1.0825 0.717474981 0.772892948
1.110833333 0.733183296 0.785520039
1.140833333 0.748965181 0.798206269
1.170833333 0.763938316 0.810242387
1.201666667 0.778548313 0.821986598
1.234166667 0.793157776 0.83373038
1.235 0.793715662 0.834023375
1.238333333 0.797435323 0.835234979
1.25 0.809913999 0.844710139
1.2625 0.822407276 0.85423496
1.276666667 0.835558647 0.864309738
1.290833333 0.847729371 0.87368381
1.305833333 0.859640152 0.882911224
1.321666667 0.871219676 0.891939387
1.3375 0.881870472 0.900299539
1.354166667 0.892169178 0.908440452
1.3675 0.899791026 0.914505298
1.380833333 0.906907395 0.920201442
1.394166667 0.913554372 0.925553486
1.408333333 0.920139345 0.930888124
1.4225 0.926267844 0.935884332
1.4375 0.932295978 0.94083057
1.4525 0.937885161 0.945447087
1.4675 0.943068015 0.949756223
1.490833333 0.950391927 0.955896012
1.515 0.957122909 0.961596391
1.54 0.963272902 0.966858853
1.566666667 0.969025379 0.971833779
1.594166667 0.974186331 0.976346026
1.6225 0.978782096 0.980408324
1.6525 0.98294691 0.984130781
1.684166667 0.986658249 0.98748587
1.716666667 0.989829679 0.990385892
1.750833333 0.992558962 0.992910389
1.786666667 0.994845146 0.995049675
1.825 0.996729149 0.996833486
1.865 0.998165955 0.998210157
1.9075 0.999190178 0.999203261
1.9525 0.999799481 0.999801106
2 1 1
}\VZplotDataB
\pgfplotstableread{
p new old
0.8 0.789473684 0.820482332
0.845333333 0.80963939 0.838518471
0.892 0.829907503 0.856444578
0.939555556 0.85006769 0.874083128
0.988 0.870110331 0.891436896
0.990666667 0.872864068 0.892504016
1.008888889 0.892284209 0.908046157
1.020888889 0.9040579 0.917536274
1.035555556 0.916956419 0.928025936
1.051111111 0.929035709 0.937952391
1.067555556 0.9402192 0.947247399
1.084888889 0.950451342 0.955855785
1.103111111 0.959696015 0.963734637
1.122222222 0.967934739 0.970852462
1.143555556 0.975600811 0.977573492
1.166222222 0.982221267 0.983470451
1.190222222 0.98776906 0.988494273
1.215555556 0.992241788 0.992614254
1.242222222 0.995658133 0.99581653
1.270666667 0.998083562 0.998130707
1.300888889 0.999521708 0.999527668
1.333333333 1 1
}\VZplotDataC
\pgfplotstableread{
p new old
0.666666667 0.857142857 0.872983346
0.740833333 0.890666198 0.903649178
0.816944444 0.923995156 0.933589214
0.818055556 0.925203176 0.934117536
0.825833333 0.933417705 0.941024156
0.834722222 0.941930742 0.948235424
0.843888889 0.949822116 0.954975372
0.853333333 0.957098262 0.961243626
0.863055556 0.963766613 0.967040143
0.873333333 0.969993325 0.972504337
0.883611111 0.975451923 0.977341943
0.894444444 0.980450133 0.981816982
0.906111111 0.985045575 0.985976752
0.918333333 0.989066629 0.989659513
0.930833333 0.992419617 0.992768138
0.943611111 0.995128974 0.995311728
0.956944444 0.997255785 0.997334462
0.970833333 0.998783621 0.99880726
0.985277778 0.999700772 0.999703708
1 1 1
}\VZplotDataD

\newcommand{\VZpanel}[5]{%
  \nextgroupplot[title={$d=#1$},xmin=4/#1,xmax=4/(#1-2),ymin=#3,
    legend to name=vzcomparisonlegend#1]
  \addplot[black,thick,name path=VZnew] table[x=p,y=new] {#2};
  \addplot[black,thick,dashed,name path=VZold] table[x=p,y=old] {#2};
  \addplot[fill=black!15,draw=none,forget plot]
    fill between[of=VZnew and VZold];
  \addplot[black,only marks,mark=o,mark size=1.6pt,
    mark options={fill=white},forget plot]
    coordinates {({4/#1},#4) ({4/#1},#5) ({4/(#1-2)},1)};
  \ifnum#1=3
    \addlegendentry{Present threshold $s_*(d,p)$}
    \addlegendentry{Vi\c{s}an--Zhang $s_{\mathrm{VZ}}(d,p)$}
    \addlegendimage{area legend,draw=none,fill=black!15}
    \addlegendentry{Additional regularity range}
  \fi
}
\begin{figure}[htbp]
\centering
\resizebox{\textwidth}{!}{%
\begin{tikzpicture}
\begin{groupplot}[
  group style={group size=2 by 2,horizontal sep=1.6cm,vertical sep=1.8cm},
  width=7.4cm,height=4.0cm,
  xlabel={Nonlinearity exponent $p$},ylabel={Regularity threshold $s$},
  axis lines=left,axis line style={-},
  ymajorgrids,grid style={black!15},tick align=outside,scaled ticks=false,
  tick label style={font=\small,/pgf/number format/fixed},
  label style={font=\small},title style={font=\large},
  ymax=1.008,clip=false,legend columns=3,
  legend style={draw=none,font=\small,cells={anchor=west}}]
\VZpanel{3}{\VZplotDataA}{0.45}{0.5}{0.6}
\VZpanel{4}{\VZplotDataB}{0.6}{0.666666667}{0.732050808}
\VZpanel{5}{\VZplotDataC}{0.75}{0.789473684}{0.820482332}
\VZpanel{6}{\VZplotDataD}{0.8}{0.857142857}{0.872983346}
\end{groupplot}
\node[anchor=south,yshift=3mm] at (current bounding box.north)
  {\pgfplotslegendfromname{vzcomparisonlegend3}};
\end{tikzpicture}%
}
\caption{Regularity thresholds for $d=3,4,5,6$.}
\label{fig:threshold-comparison}
\end{figure}
\endgroup
\FloatBarrier

\subsection{Idea of the proof}\label{subsec:proof-idea}

The proof combines the $I$-method with the interaction Morawetz
inequality, following \cite{ckstt:low,VZ07}.

The classical $I$-method introduced by \cite{ckstt:low} developed a smoothing operator $I_N:H^s\to H^1$
and and reduces global well-posedness to estimating the increment of the modified energy 
$E(I_Nu)$, since the modified equation is no longer a Hamiltonian system.  For the cubic nonlinearity or more generally the algebraic nonlinearities, the energy increment can be expanded into a finite sum of multilinear expressions, so that each
frequency interaction can be estimated separately. For the general power-type nonlinearity, however, the nonlinearity is non-algebraic, and the multilinear expansion used for algebraic nonlinearities does not apply directly.  To treat such nonlinearities within the $I$-method, Vişan and Zhang \cite{VZ07,VZ09} used Bony’s paraproduct calculus to estimate the
commutator between $I_N$ and $F$. The increment of this energy is
estimated by controlling the nonlinear commutator
$I_NF(u)-F(I_Nu)$, where $F(z)=|z|^pz$. The main goal of our paper is to design the new $I_N$-operator which capture the cancellation structure in physical space.   Our key novelty is the construction of a smoothing operator  $I_N$
  that retains the main properties of the usual $I$-multiplier while simultaneously yielding stronger estimates for the increment of the modified energy. For algebraic nonlinearities, the corresponding modified-energy estimates are well
understood. Thus, the main new ingredient of this paper is a modified-energy estimate that applies for every 
$0<s<1$
 in the presence of non-algebraic nonlinearity. More importantly, this new multiplier is specifically tailored to handle non-algebraic power-type nonlinearities more effectively, thereby providing a substantially improved version of the 
regularity threshold in this setting. 

Following our previous work \cite{MMSZheat}, we define $I_N$ as a convex combination of
heat operators:
\[
I_N=(1-\vartheta)\sum_{j=0}^{\infty}
\vartheta^j e^{\Delta/(2^jN)^2},
\qquad \vartheta=2^{-(1-s)}.
\]
This operator is a contraction on $L^r$, $1\le r\le\infty$, and
gains $1-s$ derivatives at high frequencies.
The heat kernel representation and the identity
\[
I_N=(1-\vartheta)e^{\Delta/N^2}+\vartheta I_{2N}
\]
give a decomposition of the commutator in which the linear terms
in the Taylor expansion cancel. The superposition of weight $\theta^j$ makes us to obtain the similar pointwise multiplier bound as the classical one introduced in \cite{ckstt:low}:
$$
m_N(\xi)\sim_s
\begin{cases}
1,&|\xi|\le N,\\
\left(\dfrac{N}{|\xi|}\right)^{1-s},&|\xi|\ge N.
\end{cases}
$$
Moreover, we will show that our new $I$-operator enjoy the similar boundedness as the classical $I$-operator. 

Now, we can turn to prove the global well-posedness and scattering. 
The key ingredient to the improvement is the refined energy increment (see Proposition \ref{energy increment scattering}). We will achieve this by showing the better commutator estimate, which is based on our new $I$-operator. For this nonlinear commutator estimate, we refer to Section \ref{preli}.

\subsection*{Organization of the paper}\label{subsec:organization}

The rest of the paper is organized as follows.
Section~2 introduces the notation, collects preliminary estimates,
and proves the nonlinear commutator estimates.
In Section~3 we prove Theorem~\ref{scattering} and the almost
conservation law used in its proof.

\medskip

\subsection*{Acknowledgements}

C. Miao was supported by the National Key R\&D Program of China under
Grant 2021YFA1002500 and by NSFC Grants 12026407 and 12531005.
J. Zheng was supported by the National Key R\&D Program of China under
Grant 2021YFA1002500 and by NSFC Grant 12671284.

\section{Preliminaries}\label{preli}
We will often use the notation $X \lesssim Y$ whenever there exists some constant $C$ so that $X \leq CY$. Similarly, we will use $X
\sim Y$ if $X \lesssim Y \lesssim X$.  We use $X \ll Y$ if $X \leq cY$ for some small constant $c$. The derivative operator $\nabla$
refers to the space variable only.  We use $A\pm$ to denote $A\pm\eps$ for any sufficiently small $\eps>0$.

Let $F(z):=|z|^pz$ with $p\in(\frac4d,\frac{4}{d-2})$ with $d\geq3$.  Then,
$$
F_z(z):=\frac{\partial F}{\partial z}(z)=\frac{p+2}2|z|^{p} \quad \text{and}
\quad F_{\bar z}(z):=\frac{\partial F}{\partial \bar z}(z)=\frac{p}2|z|^{p}\frac{z}{\bar z}.
$$
We write $F'$ for the vector $(F_z,F_{\bar z})$ and adopt the notation
$$
w\cdot F'(z):=wF_{z}(z)+\bar wF_{\bar z}(z).
$$
In particular, we observe the chain rule
$$
\nabla F(u)=\nabla u\cdot F'(u).
$$
Clearly $F'(z)=O(|z|^{p})$ and we have the H\"older continuity estimate
\begin{align}\label{holder continuity}
|F'(z)-F'(w)|\lesssim |z-w|^{\min\{1,p\}}(|z|+|w|)^{p-\min\{1,p\}}
\end{align}
for all $z, w\in \C$.  By the mean-value theorem,
$$
F(z+w)-F(z)=\int_0^1 w\cdot F'(z+\theta w)d\theta
$$
and hence
$$
F(z+w)=F(z)+O(|w||z|^p)+O(|w|^{p+1})
$$
for all complex values $z$ and $w$.

The Lebesgue norm  $L_x^r(\R^d)$ can be defined via
$$
\|f\|_{L^r(\R^d)}:=\Bigl(\int_{\R^d} |f(x)|^r dx\Bigr)^{\frac{1}{r}}
$$
 with the usual modifications when $r=\infty$.  For any
 integer $k\geq0$, we denote by $H^{k,r}(\R^d)$ the Sobolev
space defined as the closure of test functions in the norm
$$
\|f\|_{H^{k,r}}:=\sum_{|\alpha|\leq k}\Bigl\|\frac{\partial
^\alpha}{\partial x^\alpha}f\Bigr\|_r.
$$
We will often denote $H^{k,2}$ by $H^k$.

For each time interval $I\subseteq\R$, we use $L_t^qL_x^r(I\times\R^d)$
to denote the mixed space-time norm
$$
\|u\|_{L_t^qL_x^r(I\times\R^d)}
:=\Bigl(\int_I\Bigl(\int_{\R^d}|u(t,x)|^r\,dx\Bigr)^{q/r}\,dt\Bigr)^{1/q},
$$
with the usual modifications when $q=\infty$ or $r=\infty$.

The Fourier transform on $\R^d$  is given by
$$
\hat f(\xi) := \int_{\R^d} e^{-i x \cdot \xi} f(x) dx.
$$

By using the functional calculus, the fractional differential operators $|\nabla|^s$ can be defined by
$$
\widehat{|\nabla|^sf}(\xi) := |\xi|^s \hat f (\xi).
$$
This defines the homogeneous Sobolev norms
$$
\|f\|_{\dot H^{s}_x} := \| |\nabla|^s f \|_{L^2_x}.
$$

Let $e^{it\Delta}$ be the free Schr\"odinger propagator.  The solution to this propagator can be represented by
$$
e^{it\Delta}f(x) = \frac{1}{(4 \pi i t)^{d/2}} \int_{\R^d} e^{i|x-y|^2/4t} f(y) dy
$$
for $t\neq 0$. 
In particular, the propagator obeys the dispersive estimate
\begin{equation}\label{dispersive ineq}
\|e^{it\Delta}f\|_{L^\infty_x} \lesssim
|t|^{-\frac{d}{2}}\|f\|_{L^1_x}
\end{equation}
for all times $t\neq 0$.

We also recall the Duhamel formula
\begin{align}\label{duhamel}
u(t) = e^{i(t-t_0)\Delta}u(t_0) - i \int_{t_0}^t e^{i(t-s)\Delta}(iu_t + \Delta u)(s) ds.
\end{align}

\begin{definition}
A pair of exponents $(q,r)$ is called Schr\"odinger-\emph{admissible} if
$$
\frac{2}{q} +\frac{d}{r} = \frac{d}{2},\quad 2 \leq q,r \leq \infty, \quad \text{and} \quad (q,r,d)\neq (2,\infty,2).
$$
\end{definition}
We will use the following admissible pairs:
\[
(2,\tfrac{2d}{d-2}) \quad\text{and}\quad
(\tfrac{d-3+4\sigma}{\sigma},
 \tfrac{2d(d-3+4\sigma)}{d(d-3+4\sigma)-4\sigma}),\qquad \sigma>0,
\]
and
\[
(\tfrac{2p(2+\varepsilon)}{\varepsilon},
 \tfrac{2dp(2+\varepsilon)}{dp(2+\varepsilon)-2\varepsilon})
\quad\text{and}\quad
(2+\varepsilon,\tfrac{2d(2+\varepsilon)}{d(2+\varepsilon)-4}),
\]
where $\varepsilon>0$ is chosen so that
$2p(2+\varepsilon)/\varepsilon\ge2$.
The choice used in the almost conservation law is specified in
Lemma~\ref{interpolation1}. 
On each time interval $I\subseteq\R$, we define the following norms, which will be frequently used in the proof of the main theorem:
\begin{align}
 \|u\|_{M_\sigma(I\times\R^d)}
 &:=\|u\|_{L_t^{\frac{d-3+4\sigma}{\sigma}}L_x^{\frac{2(d-3+4\sigma)}{d-3+2\sigma}}(I\times\R^d)},\label{Morawetz norm}\\
 Z_I(I)&:=\sup_{(q,r)\ {\rm admissible}}\|\nabla I_Nu\|_{L_t^{q}L_x^{r}(I\times\R^d)},\label{ZI}\\
 \|u\|_{W(I)}&:=\sup_{(q,r)\ {\rm admissible}}
                         \|\lnr^su\|_{L_t^{q}L_x^{r}(I\times\R^d)}.\label{W}
\end{align}
We record the standard Strichartz estimates used below. We refer to  \cite{keel-tao} for a detailed proof.

\begin{lemma}[Strichartz estimate, \cite{keel-tao}]\label{lemma linear strichartz}
Let $I$ be a compact time interval with $t_0\in I$, $k$ be an arbitrary integer. Suppose that  $u$ is a solution to the forced Schr\"odinger equation
\begin{equation*}
i u_t + \Delta u =\sum_{i=1}^m F_i
\end{equation*}
for some functions $F_1$, $\cdots$, $F_m$.  Then,
\begin{equation}
\||\nabla| ^k u\|_{L_t^qL_x^r(I\times\R^d)} \lesssim \|u(t_0)\|_{\hk(\R^d)} +  \sum_{i=1}^m \||\nabla| ^k F_i\|_{L_t^{q_i'}L_x^{r_i'} (I\times\R^d)}
\end{equation}
for any admissible pairs $(q,r)$ and $(q_i,r_i)$, $1\leq i\leq m$.
\end{lemma}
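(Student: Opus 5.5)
The statement just above is the standard Strichartz estimate of Keel--Tao. I would reduce it to the free homogeneous and inhomogeneous estimates and then use linearity.

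\textbf{Reduction.} Since $|\nabla|^k$ is a Fourier multiplier, it commutes with $e^{it\Delta}$. Applying it to the equation, I may assume $k=0$, with $u$ replaced by $|\nabla|^k u$ and each $F_i$ replaced by $|\nabla|^kF_i$. By the Duhamel formula \eqref{duhamel},
\[
u(t)=e^{i(t-t_0)\Delta}u(t_0)-i\sum_{i=1}^m\int_{t_0}^t e^{i(t-s)\Delta}F_i(s)\,ds .
\]
It therefore suffices to prove two bounds for arbitrary admissible pairs. The first is the homogeneous bound $\|e^{it\Delta}f\|_{L^q_tL^r_x}\lesssim\|f\|_{L^2}$. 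The second is the retarded inhomogeneous bound $\bigl\|\int_{s<t}e^{i(t-s)\Delta}G(s)\,ds\bigr\|_{L^q_tL^r_x}\lesssim\|G\|_{L^{\tilde q'}_tL^{\tilde r'}_x}$, where $(q,r)$ and $(\tilde q,\tilde r)$ are independent admissible pairs. Summing over $i$ via the triangle inequality then gives the lemma. Restricting to the compact interval $I$ is harmless: extend $F_i$ by zero outside $I$.

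\textbf{Homogeneous estimate.} Write $T f=e^{it\Delta}f$. By the $TT^*$ argument, the bound $T:L^2\to L^q_tL^r_x$ is equivalent to
\[
TT^*G=\int e^{i(t-s)\Delta}G(s)\,ds : L^{q'}_tL^{r'}_x\to L^q_tL^r_x .
\]
Interpolating the dispersive estimate \eqref{dispersive ineq} with unitarity on $L^2$ gives
\[
\|e^{i(t-s)\Delta}\|_{L^{r'}\to L^r}\lesssim|t-s|^{-d(\frac12-\frac1r)}=|t-s|^{-2/q}.
\]
For $q>2$, the Hardy--Littlewood--Sobolev inequality in time then closes the estimate.

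\textbf{Endpoint and inhomogeneous estimates.} The endpoint $q=2$, $r=\frac{2d}{d-2}$ is the main obstacle, and it is the admissible pair the paper actually uses. Here Hardy--Littlewood--Sobolev fails. I would follow Keel--Tao. First decompose the bilinear form $\iint\langle e^{-is\Delta}G(s),e^{-it\Delta}H(t)\rangle\,ds\,dt$ dyadically in $|t-s|\sim2^j$. Next prove bilinear bounds for each dyadic piece $T_j$ with a gain $2^{-j\beta(a,b)}$ for $(1/a,1/b)$ in a neighbourhood of $(1/r,1/r)$, using the dispersive estimate and energy estimate off the diagonal. Finally sum these bounds by bilinear real interpolation into Lorentz spaces, which recovers the $\ell^1$ summability in $j$.

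The non-endpoint inhomogeneous estimate with $(q,r)\neq(\tilde q,\tilde r)$ follows by duality and composition from the homogeneous bound, $TT^*=T(T^*)$. The truncation to $s<t$ is handled by the Christ--Kiselev lemma when $q>\tilde q'$. For the double-endpoint case, where Christ--Kiselev does not apply, the retarded estimate comes directly from the same dyadic argument, because each piece $T_j$ respects the time ordering.
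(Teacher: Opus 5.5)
Your proposal is correct and is essentially the Keel--Tao argument that the paper cites without reproducing. You reduce to $k=0$ and use Duhamel; away from the endpoint you use $TT^*$ with the dispersive estimate and Hardy--Littlewood--Sobolev; at the endpoint you use the dyadic bilinear interpolation; and for the retarded term you use Christ--Kiselev, or the direct dyadic argument in the double-endpoint case. One small detail is missing: when $t_0$ lies in the interior of $I$, the Duhamel integral for $t<t_0$ is an advanced rather than retarded integral, and it follows from the retarded bound by time reversal.
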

We will also need the Littlewood-Paley theory.  Let $\varphi(\xi)$ be a smooth cut-off function supported in the ball
$|\xi| \leq 2$ and equal to one on the ball $|\xi| \leq 1$.  For each dyadic number $N \in 2^\Z$ we define the
Littlewood-Paley projections by
\begin{align*}
\widehat{P_{\leq N}f}(\xi) &:=  \varphi(\xi/N)\hat f (\xi),\\
\widehat{P_{> N}f}(\xi) &:=  [1-\varphi(\xi/N)]\hat f (\xi),\\
\widehat{P_N f}(\xi) &:=  [\varphi(\xi/N) - \varphi (2 \xi /N)] \hat f (\xi).
\end{align*}
Similarly we can define $P_{<N}$, $P_{\geq N}$, and $P_{M < \cdot \leq N} := P_{\leq N} - P_{\leq M}$, whenever $M$ and
$N$ are dyadic numbers.  We will frequently write $f_{\leq N}$ for $P_{\leq N} f$ and similarly for the other operators.
We recall the following standard Bernstein and Sobolev type inequalities:

\begin{lemma}[Bernstein estimate, \cite{KV-book}]\label{bernstein}
For any $1\le p\le q\le\infty$ and $s>0$, we have
\begin{align*}
\|P_{\geq N} f\|_{L^p_x} &\lesssim N^{-s} \| |\nabla|^s P_{\geq N} f \|_{L^p_x}\\
\| |\nabla|^s  P_{\leq N} f\|_{L^p_x} &\lesssim N^{s} \| P_{\leq N} f\|_{L^p_x}\\
\| |\nabla|^{\pm s} P_N f\|_{L^p_x} &\sim N^{\pm s} \| P_N f \|_{L^p_x}\\
\|P_{\leq N} f\|_{L^q_x} &\lesssim N^{\frac{d}{p}-\frac{d}{q}} \|P_{\leq N} f\|_{L^p_x}\\
\|P_N f\|_{L^q_x} &\lesssim N^{\frac{d}{p}-\frac{d}{q}} \| P_N f\|_{L^p_x}.
\end{align*}
\end{lemma}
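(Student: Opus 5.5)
The statement to prove is the Bernstein lemma (Lemma~\ref{bernstein}). All five inequalities follow from writing each Littlewood--Paley projection as a convolution with an $L^1$ kernel whose norm is scale invariant, combined with Young's inequality.

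First, let $\check\varphi$ be the inverse Fourier transform of the Schwartz function $\varphi$. Then $P_{\le N}f=N^d\check\varphi(N\cdot)*f$, and the kernel has $L^1$ norm $\|\check\varphi\|_{L^1}$, independent of $N$. The same holds for $P_N$, whose kernel is $N^d\check\psi(N\cdot)$ with $\psi(\xi)=\varphi(\xi)-\varphi(2\xi)$. Hence these projections are bounded on every $L^p$, $1\le p\le\infty$, uniformly in $N$.

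For the second inequality, introduce a fattened cutoff $\tilde\varphi$ equal to one on $\operatorname{supp}\varphi$, so that $P_{\le N}=\tilde P_{\le N}P_{\le N}$. Then $|\nabla|^sP_{\le N}f=K_N*P_{\le N}f$ with $\widehat{K_N}(\xi)=|\xi|^s\tilde\varphi(\xi/N)$. Scaling gives $\|K_N\|_{L^1}=N^s\|K_1\|_{L^1}$.

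The one genuine issue is that $|\xi|^s\tilde\varphi$ is not smooth at the origin, so we must check $K_1\in L^1$. This holds because $K_1$ is bounded (its Fourier transform is compactly supported and integrable), and $|\xi|^s$ is homogeneous of degree $s>0$. A standard dyadic decomposition of $|\xi|^s\tilde\varphi$ near $\xi=0$ then shows $|K_1(x)|\lesssim\langle x\rangle^{-d-s}$, which is integrable.

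For the third inequality, use the fattened annular multiplier $\tilde\psi(\xi/N)|\xi|^{\pm s}$, where $\tilde\psi$ is supported on an annulus and equals one on $\operatorname{supp}\psi$. This multiplier is smooth and compactly supported away from the origin, so its kernel is Schwartz with $L^1$ norm $\sim N^{\pm s}$. This gives both directions of the equivalence.

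For the first inequality, decompose $P_{\ge N}f=\sum_{M\ge N}P_Mf$ (with the obvious adjustment for the boundary block). By the third inequality,
\[
\|P_Mf\|_{L^p}\lesssim M^{-s}\||\nabla|^sP_Mf\|_{L^p}=M^{-s}\|\tilde P_M|\nabla|^sP_{\ge N}f\|_{L^p}\lesssim M^{-s}\||\nabla|^sP_{\ge N}f\|_{L^p}.
\]
Summing the geometric series over $M\ge N$ uses $s>0$ and gives the factor $N^{-s}$.

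For the last two inequalities, again write $P_{\le N}f=\tilde P_{\le N}P_{\le N}f$. The kernel of $\tilde P_{\le N}$ has $L^r$ norm $\sim N^{d(1-1/r)}$. Young's inequality with $1+\frac1q=\frac1r+\frac1p$ then yields the factor $N^{d/p-d/q}$. The $P_N$ case is identical.
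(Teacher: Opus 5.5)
The paper gives no proof of this lemma (it is quoted from \cite{KV-book}), and your argument is the standard one behind it---scale-invariant kernels plus Young's inequality, with the non-smoothness of $|\xi|^s$ at the origin absorbed using $s>0$---so it is correct. One displayed identity needs repair: $|\nabla|^sP_Mf=\tilde P_M|\nabla|^sP_{\ge N}f$ fails because the fattened multiplier passes frequencies outside the support of $P_M$; instead write $P_{\ge N}f=\sum_{M\ge N/2}P_MP_{\ge N}f$ and use $\||\nabla|^sP_MP_{\ge N}f\|_{L^p}=\|P_M|\nabla|^sP_{\ge N}f\|_{L^p}\lesssim\||\nabla|^sP_{\ge N}f\|_{L^p}$, which also removes the need for a separate boundary block.
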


\vspace{0.4cm}

\subsection{The $I$-operator and nonlinear commutator estimates}
Following \cite{MMSZheat}, we define $I_N$ by averaging the heat semigroup.
Fix $0<s<1$ and set $\vartheta=2^{-(1-s)}$.
For $M>0$, write $S_M=e^{\Delta/M^2}$ and define the radial symbol
\[
m(r)=(1-\vartheta)\sum_{j\ge 0}\vartheta^j e^{-4^{-j}r^2}.
\]
We use the following positive average of heat operators:
\begin{equation}\label{semigroup I definition}
I_N=(1-\vartheta)\sum_{j=0}^{\infty}\vartheta^jS_{2^jN}.
\end{equation}
Thus $I_N$ is the Fourier multiplier with symbol
\begin{equation}\label{equ:mheat}
m_N(\xi)=m\big(\frac{|\xi|}{N}\big).
\end{equation}
The advantage of \eqref{semigroup I definition} is the exact recursion
\begin{equation}\label{I recursion}
I_N=(1-\vartheta)S_N+\vartheta I_{2N},
\end{equation}
which will be used to exploit the cancellation of the linear terms in
the Taylor expansion. We recall the properties of $I_N$ from
\cite[Proposition~2.3]{MMSZheat} and record the homogeneous estimates
that follow from the multiplier bounds in its proof.
Write $D_N:=|\nabla|I_N$. By boundedness of the Riesz transforms,
$\|D_Nf\|_{L^r}\sim_r\|\nabla I_Nf\|_{L^r}$ for $1<r<\infty$.
\begin{proposition}[Properties of $I_N$]\label{basic property}
Let $0<s<1$, $1<r<\infty$, and $N\ge1$. The implicit constants below
may depend on $d,s,r$ and on the indicated differentiation orders,
but not on $M$ or $N$.

\noindent $(i)$ The operator $I_N$ is a contraction on $L^r$:
\begin{equation}\label{i1}
\|I_Nf\|_{L^r}\le\|f\|_{L^r}.
\end{equation}
This also holds for $r=1$ and $r=\infty$.

\noindent $(ii)$ For every multi-index $\alpha$,
\begin{equation}\label{equ:msymbol}
|\partial_\xi^\alpha m_N(\xi)|
\lesssim_{s,\alpha}(N+|\xi|)^{-|\alpha|}m_N(\xi),
\end{equation}
and
\begin{equation}\label{symbol comparison}
m_N(\xi)\sim_s
\begin{cases}
1,&|\xi|\le N,\\
\left(\dfrac{N}{|\xi|}\right)^{1-s},&|\xi|\ge N.
\end{cases}
\end{equation}
Consequently, for $0\le\gamma\le s$,
\begin{equation}\label{i2}
\||\nabla|^\gamma P_{>N}f\|_{L^r}
\lesssim N^{\gamma-1}\|D_Nf\|_{L^r},
\end{equation}
while
\begin{equation}\label{i3}
\|f\|_{H^s}\lesssim_s\|\langle\nabla\rangle I_Nf\|_{L^2}
\lesssim_s N^{1-s}\|f\|_{H^s}.
\end{equation}
At low frequencies we also have
$\|\nabla P_{\le N}f\|_{L^r}\lesssim\|D_Nf\|_{L^r}$.

\noindent $(iii)$ If $M\ge N$, then
\begin{equation}\label{scale comparison}
\begin{aligned}
\|\langle\nabla\rangle I_Mf\|_{L^r}
&\lesssim\left(\frac{M}{N}\right)^{1-s}
 \|\langle\nabla\rangle I_Nf\|_{L^r},\\
\|D_Mf\|_{L^r}
&\lesssim\left(\frac{M}{N}\right)^{1-s}\|D_Nf\|_{L^r}.
\end{aligned}
\end{equation}
For every real $k\ge0$ and $0\le\rho\le2$,
\begin{equation}\label{equ:profileapprox}
\begin{aligned}
\|(1-I_N)f\|_{\dot H^k}
&\lesssim_{s,\rho}N^{-\rho}\|f\|_{\dot H^{k+\rho}},\\
\|(1-I_N)f\|_{H^k}
&\lesssim_{s,\rho}N^{-\rho}\|f\|_{H^{k+\rho}}.
\end{aligned}
\end{equation}
Moreover, for every $M\ge1$,
\begin{align}
\|(1-I_M)f\|_{L^r}+\|(1-S_M)f\|_{L^r}
&\lesssim M^{-1}\|D_Mf\|_{L^r},\label{equ:IMSMdiff}\\
\|\nabla S_Mf\|_{L^r}+\|\nabla I_Mf\|_{L^r}
&\lesssim\|D_Mf\|_{L^r},\label{derivative multiplier}\\
\|(S_M-I_M)f\|_{L^r}
&\lesssim M^{-1}\|D_Mf\|_{L^r}.\label{difference multiplier}
\end{align}
The differentiated versions needed below are, for $0\le\gamma\le s$,
\begin{align}
\||\nabla|^\gamma(1-I_M)f\|_{L^r}
 +\||\nabla|^\gamma(1-S_M)f\|_{L^r}
 &\lesssim M^{\gamma-1}\|D_Mf\|_{L^r},\label{estimate-11}\\
\||\nabla|^\gamma\nabla S_Mf\|_{L^r}
 &\lesssim M^\gamma\|D_Mf\|_{L^r}.\label{estimate-22}
\end{align}

In particular,
\begin{equation}\label{equ:L2tailI}
\|(1-I_N)f\|_{L^2}
\lesssim_sN^{-1}\|\langle\nabla\rangle I_Nf\|_{L^2}.
\end{equation}

\noindent $(iv)$ Let $\dot I_N$ denote the multiplier with symbol
$N\partial_Nm_N$. Then
\begin{equation}\label{equ:Idot}
\|\dot I_Nf\|_{L^2}
\lesssim_sN^{-1}\|\langle\nabla\rangle I_Nf\|_{L^2}.
\end{equation}
\end{proposition}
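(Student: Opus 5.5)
The plan is to reduce everything to symbol estimates for the explicit series $m(r)=(1-\vartheta)\sum_{j\ge0}\vartheta^je^{-4^{-j}r^2}$, and then pass to operator bounds on $L^r$ by the Mikhlin multiplier theorem ($1<r<\infty$).

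\textbf{Part (i).} This is immediate. Each $S_M=e^{\Delta/M^2}$ is convolution with a positive Gaussian of unit mass, so it is an $L^r$ contraction for every $1\le r\le\infty$. The weights $(1-\vartheta)\vartheta^j$ are nonnegative and sum to $1$, so $I_N$ is a convex combination of contractions.

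\textbf{The comparison \eqref{symbol comparison}.} Set $r=|\xi|/N$.
\begin{itemize}
\item For $r\le1$ every term satisfies $e^{-4^{-j}r^2}\ge e^{-1}$, so $m\sim1$.
\item For $r\ge1$, let $j_0$ be the index with $2^{j_0}\sim r$. The terms with $j\ge j_0$ contribute $\gtrsim\vartheta^{j_0}=2^{-(1-s)j_0}\sim r^{-(1-s)}$.
\item The terms with $j<j_0$ are Gaussian-small: $\sum_{j<j_0}\vartheta^je^{-4^{j_0-j}}\lesssim\vartheta^{j_0}$, using $\vartheta^{-k}e^{-4^k}$ summable.
\end{itemize}

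\textbf{Symbol derivatives \eqref{equ:msymbol}.} For each term,
\[
|\partial_\xi^\alpha e^{-|\xi|^2/(2^jN)^2}|\lesssim_\alpha (2^jN)^{-|\alpha|}\bigl(1+|\xi|/2^jN\bigr)^{|\alpha|}e^{-|\xi|^2/(2^jN)^2}.
\]
This is $\lesssim(N+|\xi|)^{-|\alpha|}$ times a Gaussian of slightly smaller rate $e^{-c|\xi|^2/(2^jN)^2}$. Summing with weights $\vartheta^j$ and repeating the dyadic argument above (with $c$ in place of $1$) gives $\lesssim(N+|\xi|)^{-|\alpha|}m_N(\xi)$.

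\textbf{Mikhlin consequences.} Every remaining estimate comes from writing the operator on the left as a Mikhlin multiplier composed with $D_N$ or $\langle\nabla\rangle I_N$.
\begin{itemize}
\item For \eqref{i2}: the symbol $|\xi|^\gamma(1-\varphi(\xi/N))/(|\xi|m_N)$ is bounded by $N^{\gamma-1}(|\xi|/N)^{\gamma-s}\le N^{\gamma-1}$, since $\gamma\le s$. It satisfies Mikhlin bounds by \eqref{equ:msymbol}.
\item For \eqref{i3} and the low-frequency estimate: use $m_N\sim1$ for $|\xi|\le N$, and $\langle\xi\rangle m_N\gtrsim\langle\xi\rangle^s$ together with $\langle\xi\rangle m_N\lesssim N^{1-s}\langle\xi\rangle^s$.
\item For \eqref{scale comparison}: $m_M/m_N\lesssim(M/N)^{1-s}$ with symbol bounds, checked separately in the three regimes $|\xi|\le N$, $N\le|\xi|\le M$, $|\xi|\ge M$.
\item For \eqref{equ:profileapprox}: $0\le1-m_N(\xi)\lesssim\min(1,|\xi|^2/N^2)$, because $1-e^{-x}\le x$ and $\sum\vartheta^j4^{-j}<\infty$. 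Hence $1-m_N\lesssim(|\xi|/N)^\rho$ for $0\le\rho\le2$, and these are $L^2$ estimates by Plancherel, so no Mikhlin theory is needed.
\item For \eqref{equ:IMSMdiff}--\eqref{estimate-22}: the symbols $(1-m_M)/(|\xi|m_M)$, $(1-e^{-|\xi|^2/M^2})/(|\xi|m_M)$, $\xi e^{-|\xi|^2/M^2}/(|\xi|m_M)$, and their $|\xi|^\gamma$-weighted versions are Mikhlin with the stated powers of $M$. Near $0$, $1-m_M\sim|\xi|^2/M^2$ kills the $|\xi|^{-1}$ singularity and yields $M^{-1}(|\xi|/M)\le M^{-1}$. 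For large $\xi$, $1/(|\xi|m_M)\sim M^{-1}(|\xi|/M)^{-s}$, and $\gamma\le s$ keeps the powers nonpositive.
\item \eqref{difference multiplier} follows from the triangle inequality, and \eqref{equ:L2tailI} is the $L^2$ case combined with $\|D_Nf\|_2\le\|\langle\nabla\rangle I_Nf\|_2$.
\end{itemize}

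\textbf{Part (iv).} Compute directly:
\[
N\partial_Nm_N=(1-\vartheta)\sum_j\vartheta^j\,2\frac{|\xi|^2}{(2^jN)^2}\,e^{-|\xi|^2/(2^jN)^2}.
\]
For $|\xi|\le N$ this is $\lesssim|\xi|^2/N^2\le|\xi|/N$. For $|\xi|\ge N$, the function $xe^{-x}$ makes the same dyadic argument give $\lesssim m_N(\xi)$. In both cases $\lesssim N^{-1}\langle\xi\rangle m_N(\xi)$, and Plancherel concludes.

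\textbf{Main obstacle.} The only delicate step is the uniform derivative bound \eqref{equ:msymbol}, specifically that differentiating does not destroy the comparison with $m_N$. The tool for this is the Gaussian-rate-change trick: bound each differentiated term by a Gaussian of smaller rate $c$, then reuse the dyadic summation, which only needs $\vartheta^{-k}e^{-c4^k}$ summable. Every $L^r$ statement then follows mechanically from Mikhlin's theorem with $N$-independent constants.
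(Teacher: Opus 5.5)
Your argument is correct, but it is organized differently from the paper.

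\textbf{What the paper does.} In this paper only \eqref{estimate-11} and \eqref{estimate-22} are actually proved. The symbol bounds, the other multiplier consequences and part (iv) are cited from the companion paper \cite{MMSZheat}. The two differentiated bounds are then derived from the undifferentiated ones by soft operator arguments, without any new symbol estimate.
For \eqref{estimate-11} the paper splits $f=P_{\le M}f+P_{>M}f$. On the low part, Bernstein supplies the factor $M^\gamma$ and \eqref{equ:IMSMdiff} supplies $M^{-1}\|D_Mf\|_{L^r}$. On the high part, $1-I_M$ and $1-S_M$ are bounded on $L^r$ by (i), which leaves $\||\nabla|^\gamma P_{>M}f\|_{L^r}$; this is exactly \eqref{i2}.
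For \eqref{estimate-22} the paper factors $S_M=S_{\sqrt2M}S_{\sqrt2M}$ and uses heat smoothing $\||\nabla|^\gamma S_{\sqrt2M}g\|_{L^r}\lesssim M^\gamma\|g\|_{L^r}$. It then applies \eqref{derivative multiplier} at scale $\sqrt2M$ and \eqref{scale comparison} to return to $D_M$.

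\textbf{What your route buys.} You verify the Mikhlin condition directly for each symbol, such as $|\xi|^{\gamma-1}(1-m_M)/m_M$ and $|\xi|^{\gamma-1}\xi e^{-|\xi|^2/M^2}/m_M$. This is uniform and fully self-contained, since it also covers the parts the paper outsources. The price is checking derivative bounds near the origin, which you addressed only at the level of the size of the symbol. What is needed there is
$|\partial^\alpha[(1-m_M)|\xi|^{-1}]|\lesssim M^{-1}|\xi|^{-|\alpha|}$ for $|\xi|\le M$. This does follow from $1-m_M\lesssim|\xi|^2/M^2$, \eqref{equ:msymbol} and the Leibniz rule. The term with no derivative on $m_M$ is $\lesssim M^{-2}|\xi|^{1-|\alpha|}$. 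Each term with $|\beta|\ge1$ derivatives on $m_M$ is $\lesssim M^{-|\beta|}|\xi|^{|\beta|-1-|\alpha|}$. Both are at most $M^{-1}|\xi|^{-|\alpha|}$ on $|\xi|\le M$. The paper's route avoids touching any new symbol once the basic bounds are available.

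\textbf{One small inaccuracy.} For \eqref{estimate-22}, large frequencies are controlled by the Gaussian, via $(|\xi|/M)^{\gamma+1-s}e^{-|\xi|^2/M^2}\lesssim1$, not by $\gamma\le s$; the power there is positive. In fact \eqref{estimate-22} holds for every $\gamma\ge0$, as the paper's semigroup argument also shows.
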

\begin{proof}
    We only give the detailed proof of \eqref{estimate-11} and \eqref{estimate-22}. Others can be found in our companian paper \cite{MMSZheat}. First, we prove \eqref{estimate-11}.  By making use of the high-low decomposition $f=P_{\leq M}f+P_{>M}f$, one has the estimates:
    \begin{align*}
       &\quad \||\nabla|^\gamma(I-I_M)P_{\leq M}f\|_{L^r}+
        \||\nabla|^\gamma(I-S_M)P_{\leq M}f\|_{L^r}\\
        &\lesssim M^\gamma(\|(I-I_M)P_{\leq M}f\|_{L^r}+\|(I-S_M)P_{\leq M}f\|_{L^r}\\
        &\lesssim M^{\gamma-1}\|D_{M}f\|_{L^r}
    \end{align*}
    and
    \begin{align*}
   &\quad \||\nabla|^\gamma(I-I_M)P_{> M}f\|_{L^r}+
        \||\nabla|^\gamma(I-S_M)P_{> M}f\|_{L^r} \\&\lesssim\||\nabla|^\gamma P_{>M}f\|_{L^r}\lesssim M^{\gamma-1}\|D_Mf\|_{L^r},
    \end{align*}
    which finishes the proof of \eqref{estimate-11}.  Now, we turn to prove \eqref{estimate-22}. By using the semigroup property of $S_M$ and \eqref{scale comparison}, one has
    \begin{align*}
        \||\nabla|^\gamma \nabla S_Mf\|_{L^r}&=\||\nabla|^\gamma S_{\sqrt 2M}(\nabla S_{\sqrt 2M}f)\|_{L^r}\lesssim (\sqrt{2}M)^\gamma\|\nabla S_{\sqrt 2M}f\|_{L^r}\\
        &\lesssim M^\gamma \|D_{\sqrt 2M}f\|_{L^r}\lesssim M^\gamma\|D_Mf\|_{L^r}.
    \end{align*}
    Hence, we complete the proof of this proposition. 
\end{proof}
\begin{lemma}[Nonlinear commutator]\label{heat commutator}
Let $d\ge3$, $\frac4d<p<\frac4{d-2}$, and $s_c<s<1$.
Fix $\varepsilon>0$ such that
$q_\varepsilon:=2p(2+\varepsilon)/\varepsilon\ge2$.
On every time interval $I\subseteq\R$, we have
\begin{align}
 \|\nabla[I_NF(u)-F(I_Nu)]\|_{L_t^{2}L_x^{\frac{2d}{d+2}}(I\times\R^d)}
 &\lesssim N^{-\delta}
 \|u\|_{L_t^{\frac{2p(2+\varepsilon)}{\varepsilon}}L_x^{\frac{dp(2+\varepsilon)}{4+\varepsilon}}(I\times\R^d)}^{p-\nu}Z_I(I)^{1+\nu},
 \label{gradient commutator estimate}\\
 \|I_NF(u)-F(I_Nu)\|_{L_t^{2}L_x^{\frac{2d}{d+2}}(I\times\R^d)}
 &\lesssim N^{-(1+\delta)}
 \|u\|_{L_t^{\frac{2p(2+\varepsilon)}{\varepsilon}}L_x^{\frac{dp(2+\varepsilon)}{4+\varepsilon}}(I\times\R^d)}^{p-\nu}Z_I(I)^{1+\nu}.
 \label{zero commutator estimate}
\end{align}
All spacetime norms are taken on $I\times\R^d$, and
$\delta=\nu(1-s_c)$. The implicit constants may depend on
$d,p,s,\varepsilon$, but not on $N$ or $I$.
\end{lemma}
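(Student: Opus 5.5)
The plan is to exploit that $I_N$ is an average of heat semigroups with nonnegative weights summing to one. Hence $I_N$ is convolution with a positive kernel $K_N$ of unit mass. In such an average of $F$ the linear Taylor term cancels, and only a $(1+\nu)$-Hölder remainder survives. That remainder produces the gain $N^{-\delta}$, $\delta=\nu(1-s_c)$, through an estimate on $(1-I_N)u$ at the critical level.

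\textbf{Zeroth-order bound \eqref{zero commutator estimate}.} Pointwise,
\[
I_NF(u)(x)-F(I_Nu(x))=\int K_N(y)\bigl[F(u(x-y))-F(I_Nu(x))\bigr]\,dy .
\]
Since $\int K_N(y)\,(u(x-y)-I_Nu(x))\,dy=0$, subtracting the linear term $(u(x-y)-I_Nu(x))\cdot F'(I_Nu(x))$ and using \eqref{holder continuity} bounds the integrand by
\[
|u(x-y)-I_Nu(x)|^{1+\nu}\bigl(|u(x-y)|+|I_Nu(x)|\bigr)^{p-\nu}.
\]
Writing $u(x-y)-I_Nu(x)=[(1-I_N)u](x-y)+[I_Nu(x-y)-I_Nu(x)]$ splits this into two pieces.

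1. The first piece is handled by \eqref{equ:IMSMdiff}/\eqref{estimate-11} combined with Sobolev embedding. For $\nu$ of the factors I would use $\||\nabla|^{s_c}(1-I_N)u\|\lesssim N^{s_c-1}\|D_Nu\|$, which gives $N^{-(1-s_c)\nu}$. The remaining single factor gets $\|(1-I_N)u\|\lesssim N^{-1}\|D_Nu\|$.
2. The second piece is a difference of $I_Nu$ over displacements $|y|\lesssim N^{-1}$ weighted by $K_N$. Because the heat kernels have Gaussian tails, it is bounded by $|y|\,\mathcal M(\nabla I_Nu)$ averaged against $K_N$. This again produces $N^{-1}$ per factor, and $N^{-(1-s_c)}$ per factor once interpolated with the $\dot H^{s_c}$-type control.

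The factors $(|u|+|I_Nu|)^{p-\nu}$ go into $L_t^{q_\varepsilon}L_x^{dp(2+\varepsilon)/(4+\varepsilon)}$ via contraction \eqref{i1} and maximal-function bounds. The $1+\nu$ difference factors go into admissible Strichartz norms of $\nabla I_Nu$, after Sobolev. Hölder exponents are checked exactly as in \cite{VZ07}, with the pairs listed before \eqref{Morawetz norm}. This yields $N^{-1-\delta}$.

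\textbf{Gradient bound \eqref{gradient commutator estimate}.} By the chain rule and $\int K_N(y)\nabla u(x-y)\,dy=\nabla I_Nu(x)$,
\[
\nabla[I_NF(u)-F(I_Nu)](x)=\int K_N(y)\,\nabla u(x-y)\cdot\bigl[F'(u(x-y))-F'(I_Nu(x))\bigr]\,dy,
\]
and \eqref{holder continuity} gives one factor $|u(x-y)-I_Nu(x)|^\nu$. The difficulty is the factor $\nabla u$, which is not controlled by $Z_I$. To deal with it I would split $u=P_{\le N}u+P_{>N}u$.

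1. For $P_{\le N}u$ the factor $\nabla P_{\le N}u$ is bounded by $D_Nu$, using the last line of Proposition~\ref{basic property}(ii). The $\nu$-power difference is treated as above and gives $N^{-\delta}$.
2. For $P_{>N}u$ I would not use the pointwise formula. Instead I would estimate $\nabla I_N[F(u)-F(P_{\le N}u)]$ and $\nabla[F(I_Nu)-F(I_NP_{\le N}u)]$ directly. For this I use $\|\nabla I_N g\|\lesssim N^{1-s}\||\nabla|^s g\|$ from \eqref{symbol comparison}, a fractional chain rule for the Hölder-continuous $F'$, and the smallness $\||\nabla|^\gamma P_{>N}u\|\lesssim N^{\gamma-1}\|D_Nu\|$ from \eqref{i2}.

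Alternatively, the recursion \eqref{I recursion} can be iterated into $\sum_j\vartheta^j$ times heat-semigroup commutators at scales $2^jN$. In that sum the cross terms $(1-\vartheta)F(S_Nu)+\vartheta F(I_{2N}u)-F(I_Nu)$ again lose their linear part, and \eqref{difference multiplier} and \eqref{estimate-22} control each piece by $(2^jN)^{-\delta}$ up to the $(2^j)^{1-s}$ loss from \eqref{scale comparison}. The series then converges provided $\vartheta\,2^{1-s}$ combined with the gain $2^{-j\delta}$ decays geometrically.

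\textbf{Main obstacle.} I expect the hardest step to be this high-frequency part of the gradient estimate: making the uncontrolled $\nabla u$ pay only through $D_Nu$ while still gaining the full power $N^{-\nu(1-s_c)}$. Concretely, the summability in $j$ (equivalently the fractional chain rule exponents) must close using only $s>s_c$.
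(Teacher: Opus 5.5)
The zeroth-order bound \eqref{zero commutator estimate} is fine. Working with the kernel of $I_N$ directly, instead of scale by scale as the paper does, is legitimate there, provided the $\nu$ factors are bounded by $\|(I_Nu)(\cdot-y)-I_Nu\|_{L_t^{q_\varepsilon}L_x^{dp(2+\varepsilon)/(4+\varepsilon)}}\lesssim |y|^{1-s_c}Z_I(I)$.

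The gap is in the gradient bound \eqref{gradient commutator estimate}. In your first route you estimate $\nabla I_N[F(u)-F(P_{\le N}u)]$ and $\nabla[F(I_Nu)-F(I_NP_{\le N}u)]$ separately, and neither term is small. To leading order both equal $\nabla I_NP_{>N}u\cdot F'(\cdot)$, evaluated at $P_{\le N}u$ and at $I_NP_{\le N}u$ respectively. Each is therefore of size $\|u\|^pZ_I(I)$ with no power of $N$. Concretely, in $N^{1-s}\||\nabla|^s[F(u)-F(P_{\le N}u)]\|$ the term $|u|^p|\nabla|^sP_{>N}u$ contributes $N^{1-s}N^{s-1}\|u\|^pZ_I$. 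Estimated separately, you get neither the factor $N^{-\delta}$ nor the homogeneity $\|u\|^{p-\nu}Z_I^{1+\nu}$. The gain lives only in the cancellation between the two pieces, which your plan does not exploit. Extracting it through paraproducts is essentially the Vi\c{s}an--Zhang route, and it gives a weaker gain.

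Your alternative route reproduces the paper's decomposition $\mathcal C_N=\sum_jc_jH_{N_j}+\sum_j\vartheta^jB_j$, with $H_M=S_MF(u)-F(S_Mu)$. For $B_j$ the pointwise chain rule is harmless, since the derivatives fall on $S_{N_j}u$ and $I_{2N_j}u$. For $\nabla H_M$, however, your chain-rule formula again produces $\nabla u(x-y)$, and nothing in the plan removes it. The missing step is to differentiate the Taylor-remainder representation itself. Since $\int\nabla K_M=0$ and $\int\nabla K_M(y)u(x-y)\,dy=\nabla S_Mu(x)$, the linear term still cancels, and one has exactly
\[
\nabla H_M(x)=\int_{\R^d}\nabla K_M(y)\,R\bigl(S_Mu(x),\,u(x-y)-S_Mu(x)\bigr)\,dy .
\]
The derivative then costs only $\int|\nabla K_M|(1+M|y|)^{1+\nu}\,dy\lesssim M$. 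Meanwhile the remainder keeps its $(1+\nu)$-fold smallness, namely $N^{-(1+\delta)}2^{-(s+\nu(s-s_c))j}(1+N_j|y|)^{1+\nu}$ at $M=N_j$. This gives $\|\nabla H_{N_j}\|\lesssim N^{-\delta}2^{(1-s-\nu(s-s_c))j}$. Against $c_j=(1-\vartheta)2^{-(1-s)j}$ the series becomes $\sum_j2^{-\nu(s-s_c)j}$, which converges because $s>s_c$.

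Your bookkeeping undercounts here. You use a single $(2^j)^{1-s}$ loss against a $2^{-\delta j}$ gain, but the loss $2^{(1-s)j}$ occurs in each of the $1+\nu$ difference factors.

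This trick cannot be applied to the kernel of $I_N$ itself. There $u(x-y)-I_Nu(x)$ contains $(1-I_N)u$, which does not improve at small $|y|$. At the same time, the $j$-th heat piece of $\nabla K_N$ costs $\vartheta^j2^jN=2^{sj}N$, so the sum diverges. That is why the comparison must be made scale by scale against $S_{N_j}u$.
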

\begin{proof}
It suffices to take $u\in C_c^\infty(I\times\R^d)$.
Let $R(z,w)$ be the first-order Taylor remainder
\[
 R(z,w):=F(z+w)-F(z)-w\cdot F'(z)
       =\int_0^1w\cdot[F'(z+\tau w)-F'(z)]\,d\tau,
\]
\eqref{holder continuity} gives
\begin{equation}\label{remainder estimate}
 |R(z,w)|\lesssim (|z|+|w|)^{p-\nu}|w|^{1+\nu}.
\end{equation}
Let us denote the parameters in the following:
\[
 \vartheta=2^{-(1-s)},\qquad N_j=2^jN,\qquad
 c_j=(1-\vartheta)\vartheta^j,\qquad \sum_{j\ge0}c_j=1.
\]
By \eqref{semigroup I definition}, we have the following decomposition:
\begin{equation}\label{commutator AB decomposition}
\begin{aligned}
 \mathcal C_N(u)
 &:=I_NF(u)-F(I_Nu)\\
 &=\sum_{j\ge0}c_j\{S_{N_j}F(u)-F(S_{N_j}u)\}+\left\{\sum_{j\ge0}c_jF(S_{N_j}u)-F(I_Nu)\right\}\\
 &=:\mathcal A_N+\mathcal B_N.
\end{aligned}
\end{equation}

\medskip
\noindent\textbf{Step 1: The estimate of $\mathcal A_N$.}
For $M>0$, the heat operator $S_M$ can be represented by the kernel $K_M$,
\begin{equation}\label{heat convolution identity}
 S_Mf(x)=\int_{\R^d}K_M(y)f(x-y)\,dy.
\end{equation}
Thus
\begin{equation}\label{heat kernel cancellation}
 \int_{\R^d}K_M(y)\,dy=1,\qquad
 \int_{\R^d}\nabla K_M(y)\,dy=0.
\end{equation}
Set
\[
 H_M:=S_MF(u)-F(S_Mu),\qquad
 h_y(t,x):=u(t,x-y)-S_Mu(t,x).
\]
Since
\[
 \int_{\R^d}K_M(y)h_y(t,x)\,dy
 =S_Mu(t,x)-S_Mu(t,x)=0,
\]
we have
\begin{align*}
 &\int_{\R^d}K_M(y)R(S_Mu,h_y)\,dy\\
 &\quad=\int_{\R^d}K_M(y)F(u(t,x-y))\,dy-F(S_Mu)\\
 &\qquad-\left[\int_{\R^d}K_M(y)h_y\,dy\right]\cdot F'(S_Mu)\\
 &\quad=S_MF(u)-F(S_Mu).
\end{align*}
Consequently,
\begin{equation}\label{heat defect}
 H_M(t,x)=\int_{\R^d}K_M(y)R(S_Mu,h_y)\,dy.
\end{equation}
Integration by parts gives
\[
 \nabla S_Mf(x)=\int_{\R^d}\nabla K_M(y)f(x-y)\,dy.
\]
Therefore,
\begin{align*}
 \nabla H_M(t,x)
 & =\int_{\R^d}\nabla K_M(y)F(u(t,x-y))\,dy\\
 &\quad-\left[\int_{\R^d}\nabla K_M(y)u(t,x-y)\,dy\right]
              \cdot F'(S_Mu)\\
 &=\int_{\R^d}\nabla K_M(y)
       [F(u(t,x-y))-F(S_Mu)]\,dy\\
 &\quad-\left[\int_{\R^d}\nabla K_M(y)
       [u(t,x-y)-S_Mu]\,dy\right]\cdot F'(S_Mu).
\end{align*}
Hence
\begin{equation}\label{gradient heat defect}
 \nabla H_M(t,x)=\int_{\R^d}\nabla K_M(y)R(S_Mu,h_y)\,dy.
\end{equation}

The identity
\[
 h_y=(1-S_M)u(\cdot-y)
       -\int_0^1y\cdot\nabla S_Mu(\cdot-\tau y)\,d\tau
\]
and Proposition~\ref{basic property} imply, for $M\ge N$,
$0\le\gamma\le s$, $1\le q\le\infty$, and $1<r<\infty$,
\begin{equation}\label{general heat difference}
\begin{aligned}
 \||\nabla|^\gamma h_y\|_{L_t^qL_x^r(I\times\R^d)}
 &\le\||\nabla|^\gamma(1-S_M)u\|_{L_t^qL_x^r(I\times\R^d)}
       +|y|\,\||\nabla|^\gamma\nabla S_Mu\|_{L_t^qL_x^r(I\times\R^d)}\\
 &\lesssim N^{s-1}M^{\gamma-s}(1+M|y|)
              \|\nabla I_Nu\|_{L_t^qL_x^r(I\times\R^d)}.
\end{aligned}
\end{equation}
Taking $M=N_j$ in \eqref{general heat difference}, Sobolev embedding gives
\begin{align}
 \|h_y\|_{L_t^{2+\varepsilon}L_x^{\frac{2d(2+\varepsilon)}{d(2+\varepsilon)-4}}(I\times\R^d)}
 &\lesssim N^{-1}2^{-sj}(1+N_j|y|)Z_I(I),
 \label{difference ess}\\
 \|h_y\|_{L_t^{\frac{2p(2+\varepsilon)}{\varepsilon}}L_x^{\frac{dp(2+\varepsilon)}{4+\varepsilon}}(I\times\R^d)}
 &\lesssim\||\nabla|^{s_c}h_y\|_{L_t^{\frac{2p(2+\varepsilon)}{\varepsilon}}L_x^{\frac{2dp(2+\varepsilon)}{dp(2+\varepsilon)-2\varepsilon}}(I\times\R^d)}\notag\\
 &\lesssim N^{-(1-s_c)}2^{-(s-s_c)j}(1+N_j|y|)Z_I(I).
 \label{difference ep}
\end{align}
By the $L^p$-boundedness of the heat semigroup, it holds
\[
 \begin{aligned}
 &\|S_{N_j}u\|_{L_t^{\frac{2p(2+\varepsilon)}{\varepsilon}}L_x^{\frac{dp(2+\varepsilon)}{4+\varepsilon}}(I\times\R^d)}+\|h_y\|_{L_t^{\frac{2p(2+\varepsilon)}{\varepsilon}}L_x^{\frac{dp(2+\varepsilon)}{4+\varepsilon}}(I\times\R^d)}\\
 &\quad\le3\|u\|_{L_t^{\frac{2p(2+\varepsilon)}{\varepsilon}}L_x^{\frac{dp(2+\varepsilon)}{4+\varepsilon}}(I\times\R^d)}.
 \end{aligned}
\]
The H\"older inequality yield
\begin{equation}\label{heat remainder spacetime}
\begin{aligned}
 &\|R(S_{N_j}u,h_y)\|_{L_t^2L_x^{\frac{2d}{d+2}}(I\times\R^d)}\\
 &\quad\lesssim\|u\|_{L_t^{\frac{2p(2+\varepsilon)}{\varepsilon}}L_x^{\frac{dp(2+\varepsilon)}{4+\varepsilon}}(I\times\R^d)}^{p-\nu}
              \|h_y\|_{L_t^{\frac{2p(2+\varepsilon)}{\varepsilon}}L_x^{\frac{dp(2+\varepsilon)}{4+\varepsilon}}(I\times\R^d)}^{\nu}\|h_y\|_{L_t^{2+\varepsilon}L_x^{\frac{2d(2+\varepsilon)}{d(2+\varepsilon)-4}}(I\times\R^d)}\\
 &\quad\lesssim N^{-(1+\delta)}2^{-\{s+\nu(s-s_c)\}j}
                     (1+N_j|y|)^{1+\nu}\\
 &\qquad\times\|u\|_{L_t^{\frac{2p(2+\varepsilon)}{\varepsilon}}L_x^{\frac{dp(2+\varepsilon)}{4+\varepsilon}}(I\times\R^d)}^{p-\nu}Z_I(I)^{1+\nu}.
\end{aligned}
\end{equation}

Using the dilation property of the heat kernel: $K_M(y)=M^dK_1(My)$, we obtain
\begin{align*}
 \int_{\R^d}K_M(y)(1+M|y|)^{1+\nu}\,dy
 &=\int_{\R^d}K_1(z)(1+|z|)^{1+\nu}\,dz\lesssim1,\\
 \int_{\R^d}|\nabla K_M(y)|(1+M|y|)^{1+\nu}\,dy
 &=M\int_{\R^d}|\nabla K_1(z)|(1+|z|)^{1+\nu}\,dz
 \lesssim M.
\end{align*}
Minkowski's inequality and \eqref{heat defect}--\eqref{gradient heat defect}
therefore give
\begin{align}
 \|H_{N_j}\|_{L_t^2L_x^{\frac{2d}{d+2}}(I\times\R^d)}
 &\lesssim N^{-(1+\delta)}2^{-\{s+\nu(s-s_c)\}j}
       \|u\|_{L_t^{\frac{2p(2+\varepsilon)}{\varepsilon}}L_x^{\frac{dp(2+\varepsilon)}{4+\varepsilon}}(I\times\R^d)}^{p-\nu}Z_I(I)^{1+\nu},\label{Hj zero}\\
 \|\nabla H_{N_j}\|_{L_t^2L_x^{\frac{2d}{d+2}}(I\times\R^d)}
 &\lesssim N^{-\delta}2^{\{1-s-\nu(s-s_c)\}j}
       \|u\|_{L_t^{\frac{2p(2+\varepsilon)}{\varepsilon}}L_x^{\frac{dp(2+\varepsilon)}{4+\varepsilon}}(I\times\R^d)}^{p-\nu}Z_I(I)^{1+\nu}.
 \label{Hj gradient}
\end{align}
Thus, we have
\begin{equation}\label{A commutator bounds}
\begin{aligned}
 &\|\nabla\mathcal A_N\|_{L_t^2L_x^{\frac{2d}{d+2}}(I\times\R^d)}+N\|\mathcal A_N\|_{L_t^2L_x^{\frac{2d}{d+2}}(I\times\R^d)}\\
 &\quad\lesssim N^{-\delta}\|u\|_{L_t^{\frac{2p(2+\varepsilon)}{\varepsilon}}L_x^{\frac{dp(2+\varepsilon)}{4+\varepsilon}}(I\times\R^d)}^{p-\nu}Z_I(I)^{1+\nu}.
\end{aligned}
\end{equation}

\medskip
\noindent\textbf{Step 2: The estimate of $\mathcal B_N$.}
By \eqref{I recursion},
\[
 I_{N_j}u=(1-\vartheta)S_{N_j}u+\vartheta I_{2N_j}u.
\]
Define
\[
 B_j:=(1-\vartheta)F(S_{N_j}u)
       +\vartheta F(I_{2N_j}u)-F(I_{N_j}u).
\]
For every integer $L\ge0$,
\begin{equation}\label{finite telescoping}
\begin{aligned}
 \sum_{j=0}^L\vartheta^jB_j
 &=\sum_{j=0}^Lc_jF(S_{N_j}u)-F(I_Nu)
       +\vartheta^{L+1}F(I_{N_{L+1}}u).
\end{aligned}
\end{equation}
For $k=0,1$, smoothness and contractivity give
\[
 \vartheta^{L+1}\|\nabla^kF(I_{N_{L+1}}u)\|_{L_t^2L_x^{\frac{2d}{d+2}}(I\times\R^d)}
 \lesssim_u\vartheta^{L+1}\longrightarrow0.
\]
Consequently,
\begin{equation}\label{B commutator series}
 \mathcal B_N=\sum_{j\ge0}\vartheta^jB_j.
\end{equation}
Recall the recursion formula of $S_N$ and $I_N$:
\begin{align*}
 S_{N_j}u&=I_{N_j}u+\vartheta(S_{N_j}-I_{2N_j})u,\\
 I_{2N_j}u&=I_{N_j}u-(1-\vartheta)(S_{N_j}-I_{2N_j})u,
\end{align*}
the fundamental theorem of calculus gives
\begin{align*}
 B_j
 ={}&\vartheta(1-\vartheta)\int_0^1(S_{N_j}-I_{2N_j})u\cdot\\
 &\quad\Bigl[F'\bigl(I_{N_j}u+\tau\vartheta(S_{N_j}-I_{2N_j})u\bigr)\\
 &\qquad-F'\bigl(I_{N_j}u-\tau(1-\vartheta)(S_{N_j}-I_{2N_j})u\bigr)
       \Bigr]\,d\tau.
\end{align*}
Hence \eqref{holder continuity} implies
\begin{equation}\label{Bj pointwise}
 |B_j|\lesssim (|S_{N_j}u|+|I_{2N_j}u|)^{p-\nu}
                         |(S_{N_j}-I_{2N_j})u|^{1+\nu}.
\end{equation}
Differentiating $B_j$ and using
$\nabla I_{N_j}u=(1-\vartheta)\nabla S_{N_j}u
                   +\vartheta\nabla I_{2N_j}u$, we obtain
\begin{align*}
 \nabla B_j
 ={}&(1-\vartheta)\nabla S_{N_j}u\cdot
             [F'(S_{N_j}u)-F'(I_{N_j}u)]\\
 &+\vartheta\nabla I_{2N_j}u\cdot
             [F'(I_{2N_j}u)-F'(I_{N_j}u)].
\end{align*}
Therefore,
\begin{equation}\label{Bj gradient pointwise}
\begin{aligned}
 |\nabla B_j|
 &\lesssim (|S_{N_j}u|+|I_{2N_j}u|)^{p-\nu}
                         |(S_{N_j}-I_{2N_j})u|^\nu\\
 &\quad\times(|\nabla S_{N_j}u|+|\nabla I_{2N_j}u|).
\end{aligned}
\end{equation}

The recursion also gives
\[
 (S_{N_j}-I_{2N_j})u
 =\vartheta^{-1}(S_{N_j}-I_{N_j})u
 =\vartheta^{-1}\bigl[(1-I_{N_j})u-(1-S_{N_j})u\bigr].
\]
By Proposition~\ref{basic property} and Sobolev embedding,
\begin{align*}
 &\|(S_{N_j}-I_{2N_j})u\|_{L_t^{2+\varepsilon}L_x^{\frac{2d(2+\varepsilon)}{d(2+\varepsilon)-4}}(I\times\R^d)}\\
 &\quad\lesssim N^{s-1}N_j^{-s}Z_I(I)
 =N^{-1}2^{-sj}Z_I(I),\\
 &\|(S_{N_j}-I_{2N_j})u\|_{L_t^{\frac{2p(2+\varepsilon)}{\varepsilon}}L_x^{\frac{dp(2+\varepsilon)}{4+\varepsilon}}(I\times\R^d)}\\
 &\quad\lesssim\||\nabla|^{s_c}(S_{N_j}-I_{2N_j})u\|_{L_t^{\frac{2p(2+\varepsilon)}{\varepsilon}}L_x^{\frac{2dp(2+\varepsilon)}{dp(2+\varepsilon)-2\varepsilon}}(I\times\R^d)}\\
 &\quad\lesssim N^{s-1}N_j^{s_c-s}Z_I(I)
 =N^{-(1-s_c)}2^{-(s-s_c)j}Z_I(I).
\end{align*}
Moreover,
\begin{equation}\label{Jensen multipliers}
\begin{aligned}
 &\|\nabla S_{N_j}u\|_{L_t^{2+\varepsilon}L_x^{\frac{2d(2+\varepsilon)}{d(2+\varepsilon)-4}}(I\times\R^d)}+\|\nabla I_{2N_j}u\|_{L_t^{2+\varepsilon}L_x^{\frac{2d(2+\varepsilon)}{d(2+\varepsilon)-4}}(I\times\R^d)}\\
 &\quad\lesssim (N_j/N)^{1-s}Z_I(I)
              =2^{(1-s)j}Z_I(I),
\end{aligned}
\end{equation}
and contractivity yields
\[
 \begin{aligned}
 &\|S_{N_j}u\|_{L_t^{\frac{2p(2+\varepsilon)}{\varepsilon}}L_x^{\frac{dp(2+\varepsilon)}{4+\varepsilon}}(I\times\R^d)}+\|I_{2N_j}u\|_{L_t^{\frac{2p(2+\varepsilon)}{\varepsilon}}L_x^{\frac{dp(2+\varepsilon)}{4+\varepsilon}}(I\times\R^d)}\\
 &\quad\le2\|u\|_{L_t^{\frac{2p(2+\varepsilon)}{\varepsilon}}L_x^{\frac{dp(2+\varepsilon)}{4+\varepsilon}}(I\times\R^d)}.
 \end{aligned}
\]
H\"older's inequality now gives
\begin{equation}\label{Bj zero}
\begin{aligned}
 &\|B_j\|_{L_t^2L_x^{\frac{2d}{d+2}}(I\times\R^d)}\\
 &\quad\lesssim\|u\|_{L_t^{\frac{2p(2+\varepsilon)}{\varepsilon}}L_x^{\frac{dp(2+\varepsilon)}{4+\varepsilon}}(I\times\R^d)}^{p-\nu}
                \|(S_{N_j}-I_{2N_j})u\|_{L_t^{\frac{2p(2+\varepsilon)}{\varepsilon}}L_x^{\frac{dp(2+\varepsilon)}{4+\varepsilon}}(I\times\R^d)}^{\nu}\\
 &\qquad\times\|(S_{N_j}-I_{2N_j})u\|_{L_t^{2+\varepsilon}L_x^{\frac{2d(2+\varepsilon)}{d(2+\varepsilon)-4}}(I\times\R^d)}\\
 &\quad\lesssim N^{-(1+\delta)}2^{-\{s+\nu(s-s_c)\}j}
                  \|u\|_{L_t^{\frac{2p(2+\varepsilon)}{\varepsilon}}L_x^{\frac{dp(2+\varepsilon)}{4+\varepsilon}}(I\times\R^d)}^{p-\nu}Z_I(I)^{1+\nu},
\end{aligned}
\end{equation}
and
\begin{equation}\label{Bj gradient}
\begin{aligned}
 &\|\nabla B_j\|_{L_t^2L_x^{\frac{2d}{d+2}}(I\times\R^d)}\\
 &\quad\lesssim\|u\|_{L_t^{\frac{2p(2+\varepsilon)}{\varepsilon}}L_x^{\frac{dp(2+\varepsilon)}{4+\varepsilon}}(I\times\R^d)}^{p-\nu}
                \|(S_{N_j}-I_{2N_j})u\|_{L_t^{\frac{2p(2+\varepsilon)}{\varepsilon}}L_x^{\frac{dp(2+\varepsilon)}{4+\varepsilon}}(I\times\R^d)}^{\nu}\\
 &\qquad\times\Bigl(\|\nabla S_{N_j}u\|_{L_t^{2+\varepsilon}L_x^{\frac{2d(2+\varepsilon)}{d(2+\varepsilon)-4}}(I\times\R^d)}
                        +\|\nabla I_{2N_j}u\|_{L_t^{2+\varepsilon}L_x^{\frac{2d(2+\varepsilon)}{d(2+\varepsilon)-4}}(I\times\R^d)}\Bigr)\\
 &\quad\lesssim N^{-\delta}2^{\{1-s-\nu(s-s_c)\}j}
                  \|u\|_{L_t^{\frac{2p(2+\varepsilon)}{\varepsilon}}L_x^{\frac{dp(2+\varepsilon)}{4+\varepsilon}}(I\times\R^d)}^{p-\nu}Z_I(I)^{1+\nu}.
\end{aligned}
\end{equation}
Hence, both series are summable and
\eqref{B commutator series} yields
\begin{equation}\label{B commutator bounds}
\begin{aligned}
 &\|\nabla\mathcal B_N\|_{L_t^2L_x^{\frac{2d}{d+2}}(I\times\R^d)}+N\|\mathcal B_N\|_{L_t^2L_x^{\frac{2d}{d+2}}(I\times\R^d)}\\
 &\quad\lesssim N^{-\delta}\|u\|_{L_t^{\frac{2p(2+\varepsilon)}{\varepsilon}}L_x^{\frac{dp(2+\varepsilon)}{4+\varepsilon}}(I\times\R^d)}^{p-\nu}Z_I(I)^{1+\nu}.
\end{aligned}
\end{equation}
Combining \eqref{commutator AB decomposition},
\eqref{A commutator bounds}, and \eqref{B commutator bounds} proves
\eqref{gradient commutator estimate}--\eqref{zero commutator estimate}.

\end{proof}

Since we work at low regularity $0<s<1$, we will need the following fractional chain rule to estimate our nonlinearity in $H_x^s$.

\begin{lemma}[Fractional chain rule $\operatorname{I}$, \cite{KV-book}]\label{F Lip}
Suppose that $F\in C^1(\mathbb C)$, $\alpha \in (0,1)$, and $1<r,r_1,r_2<\infty$ such that $\frac 1r=\frac 1{r_1}+\frac 1{r_2}$.   Then,
$$
\||\nabla|^{\alpha}F(u)\|_r\lesssim \|F'(u)\|_{r_1}\||\nabla|^{\alpha}u\|_{r_2}.
$$
\end{lemma}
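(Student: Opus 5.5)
This is the standard fractional chain rule of Christ--Weinstein type, and I would prove it through a square-function characterization of $|\nabla|^\alpha$ combined with a pointwise bound on finite differences.

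\textbf{Step 1: characterize $|\nabla|^\alpha$.} For $0<\alpha<1$ and $1<r<\infty$ I would use
\[
\||\nabla|^\alpha f\|_{L^r}\sim\Bigl\|\Bigl(\int_0^\infty\Bigl[t^{-\alpha}\int_{|y|<1}|f(x+ty)-f(x)|\,dy\Bigr]^2\frac{dt}{t}\Bigr)^{1/2}\Bigr\|_{L^r_x}.
\]
Only the direction ``$\lesssim$'' is needed for $F(u)$ and only ``$\gtrsim$'' is needed for $u$. Equivalently, via Littlewood--Paley theory, I can write $|\nabla|^\alpha F(u)$ as $\sum_N N^\alpha P_N F(u)$ with square function control. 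Then $P_NF(u)(x)=\int \check\psi_N(y)[F(u(x-y))-F(u(x))]\,dy$, using that $\check\psi_N$ has mean zero.

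\textbf{Step 2: pointwise bound.} By the mean value theorem,
\[
|F(u(x-y))-F(u(x))|\le |u(x-y)-u(x)|\int_0^1|F'(u(x)+\theta(u(x-y)-u(x)))|\,d\theta .
\]
The difficulty is that $F'$ is evaluated at intermediate points rather than at $x$. The standard fix is to bound $|F'(\cdot)|$ by $M(F'(u))$-type quantities. More precisely, I would avoid the intermediate point by splitting $|F(a)-F(b)|\le |a-b|(|F'(a)|+|F'(b)|)$; this bound is valid for convex-type $F'$, and for general $C^1$ functions one uses instead the argument of Christ--Weinstein. This gives
\[
|P_NF(u)(x)|\lesssim |F'(u(x))|\int|\check\psi_N(y)||u(x-y)-u(x)|dy+\int|\check\psi_N(y)||F'(u(x-y))||u(x-y)-u(x)|dy.
\]

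\textbf{Step 3: square functions and Hölder.} The first term gives $|F'(u(x))|\,\mathcal D_\alpha u(x)$, where $\mathcal D_\alpha u$ is the difference square function. By Step 1 (the Strichartz/Stein characterization) it is bounded in $L^{r_2}$ by $\||\nabla|^\alpha u\|_{r_2}$, and Hölder closes this term. For the second term, I would split $u(x-y)-u(x)=\sum_M(P_Mu(x-y)-P_Mu(x))$. Low frequencies $M\le N$ are bounded by $\min(M|y|,1)$ times maximal functions of $P_Mu$. High frequencies $M>N$ are bounded directly. After summing with the weights $N^\alpha$ and $M^{-\alpha}$, I expect a bound by $M(|F'(u)|^{\theta})^{1/\theta}(x)$ times vector-valued maximal functions of $(M^\alpha P_M u)_M$, with $\theta$ slightly below $1$ so that $r_1/\theta>1$. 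Fefferman--Stein's vector-valued maximal inequality, Hölder with $\frac1r=\frac1{r_1}+\frac1{r_2}$, and Littlewood--Paley square function estimates then finish the argument.

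\textbf{Main obstacle.} The main obstacle is Step 2--3 for the cross term, where $F'$ is evaluated away from $x$. Handling it requires the maximal-function absorption with an exponent $\theta<1$ and careful summation of the dyadic weights. This summation is exactly where $0<\alpha<1$ enters: geometric convergence of $\sum_{M\le N}(M/N)^{1-\alpha}$ and of $\sum_{M>N}(N/M)^\alpha$. Since the lemma is cited from \cite{KV-book}, in the paper I would simply refer to that proof.
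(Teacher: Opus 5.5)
\textbf{Gap in Step 2.} The proof breaks where you say that for general $C^1$ functions ``one uses instead the argument of Christ--Weinstein''. No argument can cover that case, because the inequality is false for general $F\in C^1(\C)$. The statement as written here carries exactly this overly general hypothesis; the paper gives no proof beyond citing \cite{KV-book}.

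Here is a counterexample. Let $G(t)=3t^2-2t^3$, $F(z)=G(\operatorname{Re}z)$, and $u=\chi_B$ with $B$ a ball. Since $G(0)=0$, $G(1)=1$ and $G'(0)=G'(1)=0$, we get $F(u)=u$ and $F'(u)=(F_z,F_{\bar z})(u)\equiv0$. For $0<\alpha<1/r_2$ (hence also $\alpha<1/r$), $\chi_B$ lies in $H^{\alpha,r}\cap H^{\alpha,r_2}$. So the right-hand side is $0$, while the left-hand side is $\||\nabla|^\alpha\chi_B\|_{r}>0$. No constant helps, even one depending on $F$. Mollifying $\chi_B$ at scale $\epsilon\to0$ gives smooth counterexamples: the right side is $O(\epsilon^{1/r_1})$ and the left side stays bounded below. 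This is exactly the phenomenon you flagged. The value of $F'$ at the intermediate point $u(x)+\theta(u(x-y)-u(x))$ is not a value of $F'(u)$ at any point. Hence neither the two-point bound nor any maximal function of $F'(u)$ can control it.

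\textbf{The fix.} What is true, and all the paper needs, is the version with an extra hypothesis. The paper only uses the lemma for $F(z)=|z|^pz$, through Lemma~\ref{use Morawetz}. The hypothesis is
\[
\sup_{0\le\tau\le1}|F'(\tau a+(1-\tau)b)|\lesssim|F'(a)|+|F'(b)|,\qquad a,b\in\C .
\]
It holds for $|z|^pz$ because $|F'(z)|\sim|z|^p$ and $|\tau a+(1-\tau)b|\le\max\{|a|,|b|\}$. Under it your two-point bound is legitimate, and your Steps 1 and 3 give the standard proof, with two adjustments in Step 3.

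First, the exponent $\theta<1$ belongs on Peetre-type maximal functions of the frequency pieces, not on $F'(u)$. For $M\le N$ you need it to bound $\sup_{|z-x|\le|y|}|\nabla P_Mu(z)|$ by $M(1+M|y|)^{d/\theta}\mathcal M(|P_Mu|^\theta)^{1/\theta}(x)$. The factor $F'(u(x-y))$ is then absorbed by the plain maximal function $\mathcal M(F'(u))$, which is bounded on $L^{r_1}$ since $r_1>1$.

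Second, for $M>N$ keep $F'(u(x-y))P_Mu(x-y)$ together. Bound its $|\check\psi_N|$-average by $M^{-\alpha}\mathcal M(F'(u)\,M^\alpha P_Mu)(x)$ before applying Fefferman--Stein and H\"older. Moving $P_Mu(x-y)$ to $x$ would cost $(M/N)^{d/\theta}$, which the factor $(N/M)^\alpha$ cannot absorb.
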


When the function $F$ is no longer $C^1$, but merely H\"older continuous, we have the following useful chain rule:

\begin{lemma}[Fractional chain rule $\operatorname{II}$,\cite{KV-book}]\label{fdfp}
Let $F$ be a H\"older continuous function of order $0<\alpha<1$.  Then, for every $0<\sigma<\alpha$, $1<r<\infty$,
and $\tfrac{\sigma}{\alpha}<\rho<1$ we have
\begin{align}\label{fdfp2}
\bigl\| |\nabla|^\sigma F(u)\bigr\|_{L^r(\R^d)}
\lesssim \bigl\||u|^{\alpha-\frac{\sigma}{\rho}}\bigr\|_{L^{r_1}(\R^d)} \bigl\||\nabla|^\rho u\bigr\|^{\frac{\sigma}{\rho}}_{L^{\frac{\sigma}{\rho}r_2}(\R^d)},
\end{align}
provided $\tfrac{1}{r}=\tfrac{1}{r_1} +\tfrac{1}{r_2}$ and $(1-\frac\sigma{\alpha \rho})r_1>1$.
\end{lemma}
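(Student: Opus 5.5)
We would prove the estimate by the standard Littlewood--Paley argument for Hölder continuous nonlinearities (as in Vi\c{s}an's treatment). The proof has three steps: a pointwise bound, a control of the increments of $u$, and a summation over dyadic scales.

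\textbf{Step 1: pointwise bound.} By the square-function characterization of $\dot H^{\sigma,r}$ for $1<r<\infty$,
\[
\bigl\||\nabla|^\sigma F(u)\bigr\|_{L^r}\sim\Bigl\|\Bigl(\sum_N N^{2\sigma}|P_NF(u)|^2\Bigr)^{1/2}\Bigr\|_{L^r}.
\]
Write $P_N$ as convolution with $\check\psi_N(y)=N^d\check\psi(Ny)$, which has mean zero. Then
\[
|P_NF(u)(x)|\le\int|\check\psi_N(y)|\,|F(u(x-y))-F(u(x))|\,dy\lesssim\int|\check\psi_N(y)|\,|u(x-y)-u(x)|^{\alpha}\,dy.
\]
We split the Hölder power as
\[
|u(x-y)-u(x)|^\alpha\le|u(x-y)-u(x)|^{\sigma/\rho}\bigl(|u(x-y)|+|u(x)|\bigr)^{\alpha-\sigma/\rho}.
\]
This is legitimate because $\sigma/\rho<\alpha$.

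\textbf{Step 2: increments of $u$.} We decompose $u=\sum_M P_Mu$ and use
\[
|P_Mu(x-y)-P_Mu(x)|\lesssim\min(M|y|,1)\,\bigl[\mathcal M(P_Mu)(x-y)+\mathcal M(P_Mu)(x)\bigr],
\]
where $\mathcal M$ is the Hardy--Littlewood maximal function. (Use the mean value theorem with Bernstein when $M|y|\le1$, and the trivial bound otherwise.) Since $\sigma/\rho<1$, subadditivity of $t\mapsto t^{\sigma/\rho}$ puts the power inside the $M$-sum. Integrating against $|\check\psi_N(y)|$, whose tails are rapidly decaying, and absorbing the translates into maximal functions, we expect a pointwise bound of the form
\[
N^\sigma|P_NF(u)(x)|\lesssim \mathcal M\bigl(|u|^{\alpha-\sigma/\rho}\bigr)(x)\cdot\Bigl[\sum_M \min\Bigl(\frac MN,1\Bigr)^{\sigma/\rho}\Bigl(\frac NM\Bigr)^{\sigma} \bigl(\mathcal M(M^\rho P_Mu)(x)\bigr)^{\sigma/\rho}\Bigr],
\]
possibly with an extra outer maximal function after Hölder in $y$. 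Here we used $N^\sigma=(N^\rho)^{\sigma/\rho}$ and $|P_Mu|=M^{-\rho}|M^\rho P_Mu|$.

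\textbf{Step 3: summation and Hölder.} The kernel in $N,M$ is $(N/M)^{\sigma}$ for $M\ge N$, which decays since $\sigma>0$. For $M\le N$ it is $(M/N)^{\sigma/\rho-\sigma}$, which decays since $\rho<1$. It is therefore a Schur-summable convolution in $\log_2(M/N)$. Taking the $\ell^2_N$ norm thus gives
\[
\Bigl\|\bigl(\mathcal M(M^\rho P_Mu)\bigr)^{\sigma/\rho}\Bigr\|_{\ell^2_M}\le\Bigl(\sum_M\bigl|\mathcal M(M^\rho P_Mu)\bigr|^{2}\Bigr)^{\sigma/(2\rho)},
\]
where we use $\ell^{2\sigma/\rho}\subset\ell^2$ when $\sigma/\rho\le1$, after an embedding. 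We then apply Hölder with $\frac1r=\frac1{r_1}+\frac1{r_2}$.

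For the second factor, the $L^{r_2}$ norm of the quantity raised to the power $\sigma/\rho$ equals the $L^{\sigma r_2/\rho}$ norm to the power $\sigma/\rho$. The Fefferman--Stein vector-valued maximal inequality, followed by the square-function characterization, bounds it by $\||\nabla|^\rho u\|_{L^{\frac\sigma\rho r_2}}^{\sigma/\rho}$. For the first factor, $\mathcal M$ is bounded on $L^{r_1}$ because $r_1>1$. This is where the hypothesis $(1-\frac{\sigma}{\alpha\rho})r_1>1$ enters: it ensures that whatever extra maximal function or Hölder-in-$y$ step we need on $(|u(x-y)|+|u(x)|)^{\alpha-\sigma/\rho}$ still lands in a space with exponent above $1$. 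We would trace exactly which exponent is required when writing out Step 2.

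\textbf{Main obstacle.} The delicate step is Step 2. The two factors are coupled through the same translate $x-y$, so they cannot simply be separated. We would handle this by bounding $(|u(x-y)|+|u(x)|)^{\alpha-\sigma/\rho}$ and the increment factor jointly via Hölder in $y$ against $|\check\psi_N|$, with exponents chosen according to the hypothesis $(1-\frac\sigma{\alpha\rho})r_1>1$. Each resulting piece should then be a maximal function of a fixed function at scale $N^{-1}$, so that Fefferman--Stein applies uniformly in $N$.
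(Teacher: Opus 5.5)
\textbf{There is a genuine gap.} The paper gives no proof of this lemma; it only cites \cite{KV-book}. So I judge your argument on its own terms.

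\textbf{The false inequality in Step 3.} Set $a_M:=\mathcal M(M^\rho P_Mu)(x)$ and $q:=2\sigma/\rho<2$. The left side of your Step 3 display is $\|a\|_{\ell^q_M}^{\sigma/\rho}$, and the right side is $\|a\|_{\ell^2_M}^{\sigma/\rho}$. The inclusion $\ell^q\subset\ell^2$ gives $\|a\|_{\ell^2}\le\|a\|_{\ell^q}$, which is the opposite of what you need. For example, take $a_M=1$ for $K$ values of $M$: the two sides are $K^{1/2}$ and $K^{\sigma/(2\rho)}$. What you actually control is a Triebel--Lizorkin norm $\dot F^{\rho}_{\frac{\sigma}{\rho}r_2,\,2\sigma/\rho}$, which is strictly stronger than $\||\nabla|^\rho u\|_{L^{\frac{\sigma}{\rho}r_2}}$.

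\textbf{Why the loss goes back to Steps 1--2.} Moving the power $\sigma/\rho$ back outside the $M$-sum does not help. Once you split $|u(x-y)-u(x)|^\alpha\le|u(x-y)-u(x)|^{\sigma/\rho}(|u(x-y)|+|u(x)|)^{\alpha-\sigma/\rho}$ uniformly in $N$, you get $N^\sigma|P_NF(u)|\lesssim(\cdots)\,b_N^{\sigma/\rho}$ with $(b_N)\in\ell^2_N$. But $(b_N^{\sigma/\rho})$ lies only in $\ell^{2\rho/\sigma}_N\supsetneq\ell^2_N$. The surplus power $\alpha-\sigma/\rho$ of the increment, which you discard, is exactly what must produce summability at high frequencies. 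Relatedly, your proposal never identifies where $(1-\frac{\sigma}{\alpha\rho})r_1>1$ is used.

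\textbf{A way to close the argument.} Keep the full power $\alpha$ and work with a minimum rather than a product.
\begin{enumerate}
\item By the decay of $\check\psi_N$, $|P_NF(u)(x)|\lesssim\sum_{k\ge0}2^{-10dk}A_{2^k/N}(x)$, where $A_R(x)$ is the average of $|u(x-y)-u(x)|^\alpha$ over $|y|\le R$.
\item Bound the increment in two ways: by $\sum_M|P_Mu(x-y)-P_Mu(x)|$, and by $|u(x-y)|+|u(x)|$. On the first, use Jensen (since $\alpha<1$), your Step 2 estimate averaged over the ball, and a weighted Cauchy--Schwarz in $M$ (this is where $0<\rho<1$ enters). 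This gives
\[
A_R(x)\lesssim\min\bigl\{R^{\alpha\rho}G(x)^\alpha,\ \mathcal M(|u|^\alpha)(x)\bigr\},\qquad G:=\Bigl(\sum_M|\mathcal M\mathcal M(M^\rho P_Mu)|^2\Bigr)^{1/2}.
\]
\item Sum $\sum_N N^{2\sigma}\min\{N^{-\alpha\rho}G^\alpha,\mathcal M(|u|^\alpha)\}^2$, splitting at the $x$-dependent crossover $N_0^{\alpha\rho}=G^\alpha/\mathcal M(|u|^\alpha)$. The series converges above $N_0$ because $\sigma<\alpha\rho$, and below $N_0$ because $\sigma>0$. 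This yields
\[
\Bigl(\sum_N N^{2\sigma}|P_NF(u)|^2\Bigr)^{1/2}\lesssim G^{\sigma/\rho}\,\bigl[\mathcal M(|u|^\alpha)\bigr]^{1-\frac{\sigma}{\alpha\rho}}.
\]
\item Apply H\"older with $\frac1r=\frac1{r_1}+\frac1{r_2}$. Fefferman--Stein (applied twice) bounds $\|G\|_{L^{\frac{\sigma}{\rho}r_2}}$ by $\||\nabla|^\rho u\|_{L^{\frac{\sigma}{\rho}r_2}}$. The estimate $\|[\mathcal M(|u|^\alpha)]^{1-\frac{\sigma}{\alpha\rho}}\|_{L^{r_1}}\lesssim\||u|^{\alpha-\frac{\sigma}{\rho}}\|_{L^{r_1}}$ is the maximal inequality on $L^{(1-\frac{\sigma}{\alpha\rho})r_1}$. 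That is precisely where the hypothesis $(1-\frac{\sigma}{\alpha\rho})r_1>1$ is needed, and it gives \eqref{fdfp2}.
\end{enumerate}
Averaging over balls, together with $\text{avg}\min\le\min\text{avg}$, also removes the $x$ versus $x-y$ coupling you flagged as the main obstacle, with no H\"older step in $y$.
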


We also need the following consequence of the fractional chain rule,
H\"older's inequality, and Sobolev embedding; see
\cite[Lemma~2.9]{VZ07}.

\begin{lemma}[Nonlinear estimate for $M_\sigma$ norm, \cite{VZ07}]\label{use Morawetz}
Let $0<\sigma\leq s<1$ such that $\tfrac{\sigma(d-2)}{d-3+4\sigma}<s$ and let $\tfrac{4}{d}<p<\tfrac{4}{d-2s}$.  Then, there exists $\varepsilon_0>0$
sufficiently small such that on every slab $I\times\R^d$ we have
\begin{equation}\label{use Morawetz eq}
\begin{aligned}
&\|\langle\nabla\rangle^s(|u|^pu)\|_{L_t^{2}L_x^{\frac {2d}{d+2}}(I\times\R^d)}\\
&\quad\lesssim \|\langle\nabla\rangle^s u\|_{L_t^{2+\varepsilon_0}L_x^{\frac {2d(2+\varepsilon_0)}{d(2+\varepsilon_0)-4}}(I\times\R^d)}
 \|u\|^{\frac{\varepsilon_0(d-3+4\sigma)}{2\sigma(2+\varepsilon_0)}}_{M_\sigma(I\times\R^d)} \|u\|_{L_t^\infty L_x^2(I\times\R^d)}^{\alpha(\varepsilon_0)}
 \|u\|^{\beta(\varepsilon_0)}_{L_t^\infty \dot H^s_x(I\times\R^d)}.
\end{aligned}
\end{equation}
Here $\varepsilon_0$ is chosen so that $\alpha(\varepsilon_0)>0$
and $\beta(\varepsilon_0)>0$, where
$$
\alpha(\varepsilon_0):=p\bigl(1-\tfrac{d}{2s}\bigr)+\tfrac{8\sigma+\varepsilon_0[\sigma(d+2)-s(d-3+4\sigma)]}{2s\sigma(2+\varepsilon_0)}   \quad \text{and} \quad \beta(\varepsilon_0):=\tfrac{d}{s}\bigl(\tfrac{p}{2}-\tfrac{8+\varepsilon_0(d+2)}{2d(2+\varepsilon_0)}\bigr).
$$
\end{lemma}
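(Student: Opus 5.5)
The estimate \eqref{use Morawetz eq} will follow from the fractional chain rule plus H\"older, exactly as in \cite[Lemma~2.9]{VZ07}. The plan is to reduce everything to a single product estimate, then interpolate the resulting Lebesgue norm of $u$ between the Morawetz norm, mass, and $\dot H^s$.

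First, split $\langle\nabla\rangle^s$ into the zeroth-order part and $|\nabla|^s$. For the $|\nabla|^s$ part, apply Lemma~\ref{F Lip} with $F(z)=|z|^pz$, which is $C^1$ with $|F'(z)|\lesssim|z|^p$. Take $r=\frac{2d}{d+2}$, $r_2=\frac{2d(2+\varepsilon_0)}{d(2+\varepsilon_0)-4}$, and $r_1$ fixed by $\frac1{r_1}=\frac1r-\frac1{r_2}=\frac{4}{d(2+\varepsilon_0)}$. In time, H\"older with $\frac12=\frac1{2+\varepsilon_0}+\frac{\varepsilon_0}{2(2+\varepsilon_0)}$ gives
\[
\||\nabla|^s F(u)\|_{L_t^2L_x^{\frac{2d}{d+2}}}\lesssim \||\nabla|^su\|_{L_t^{2+\varepsilon_0}L_x^{r_2}}\,\|u\|^p_{L_t^{\frac{2p(2+\varepsilon_0)}{\varepsilon_0}}L_x^{\frac{dp(2+\varepsilon_0)}{4}}}.
\]
The zeroth-order part is handled identically, with $|\nabla|^s u$ replaced by $u$. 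Together the two parts give $\|\langle\nabla\rangle^s u\|_{L_t^{2+\varepsilon_0}L_x^{r_2}}$.

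Second, bound $\|u\|^p_{L_t^{q}L_x^{\tilde r}}$ with $q=\frac{2p(2+\varepsilon_0)}{\varepsilon_0}$ and $\tilde r=\frac{dp(2+\varepsilon_0)}{4}$. Interpolate pointwise in time:
\[
\|u(t)\|_{L^{\tilde r}}^p\lesssim \|u(t)\|_{M\text{-space}}^{a}\,\|u(t)\|_{L^2}^{\alpha}\,\|u(t)\|_{\dot H^s}^{\beta},
\]
where the $M$-space is $L_x^{\frac{2(d-3+4\sigma)}{d-3+2\sigma}}$ and $\dot H^s$ enters through Sobolev embedding $\dot H^s\subset L^{\frac{2d}{d-2s}}$. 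The time exponent must be spent entirely on the $M_\sigma$ factor. That forces $a\cdot\frac{\sigma}{d-3+4\sigma}=\frac{\varepsilon_0}{2(2+\varepsilon_0)}$, so $a=\frac{\varepsilon_0(d-3+4\sigma)}{2\sigma(2+\varepsilon_0)}$, matching the statement; the remaining factors are taken in $L_t^\infty$. The exponents $\alpha,\beta$ are then determined by two conditions. The homogeneity count $a+\alpha+\beta=p$ is one. The spatial Lebesgue/scaling balance $\frac{p}{\tilde r}=\frac{a(d-3+2\sigma)}{2(d-3+4\sigma)}+\frac\alpha2+\beta\frac{d-2s}{2d}$ is the other. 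Solving these two linear equations should reproduce the stated $\alpha(\varepsilon_0)$ and $\beta(\varepsilon_0)$; this is routine algebra.

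The main obstacle is checking that this interpolation is legitimate. This requires $\alpha,\beta>0$, i.e.\ $\tilde r$ must lie strictly inside the range spanned by $2$, $\frac{2d}{d-2s}$ and the Morawetz exponent, with the Morawetz weight $a$ small. As $\varepsilon_0\to0$ we get $a\to0$, $\beta\to\frac ds(\frac p2-\frac2d)>0$ because $p>\frac4d$, and $\alpha\to p(1-\frac d{2s})+\frac2s>0$ because $p<\frac4{d-2s}$. By continuity both remain positive for $\varepsilon_0$ small. The hypothesis $\frac{\sigma(d-2)}{d-3+4\sigma}<s$ guarantees that the Morawetz space exponent is compatible, i.e.\ it sits below the $\dot H^s$ Sobolev exponent, so a three-point H\"older interpolation with nonnegative weights exists. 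If the three-way interpolation is awkward, I would instead first interpolate $L^{\tilde r}$ between $L^2$ and $L^{\frac{2d}{d-2s}}$ and then peel off a small power of the Morawetz norm via H\"older.
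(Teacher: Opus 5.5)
Your route is the one the paper indicates. The paper gives no proof; it defers to \cite[Lemma~2.9]{VZ07} as a consequence of the fractional chain rule, H\"older and Sobolev embedding. The route works, but there is an arithmetic slip that propagates. With $r=\frac{2d}{d+2}$ and $r_2=\frac{2d(2+\varepsilon_0)}{d(2+\varepsilon_0)-4}$ one has
\[
\frac1{r_1}=\frac1r-\frac1{r_2}=\frac1d+\frac{2}{d(2+\varepsilon_0)}=\frac{4+\varepsilon_0}{d(2+\varepsilon_0)},
\]
not $\frac{4}{d(2+\varepsilon_0)}$. Hence $\tilde r=\frac{dp(2+\varepsilon_0)}{4+\varepsilon_0}$, which is the norm already appearing in Lemma~\ref{heat commutator}. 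With your $\tilde r=\frac{dp(2+\varepsilon_0)}{4}$ the spatial H\"older step does not close. Your two linear relations would also return $\beta=\frac ds\bigl(\frac p2-\frac{8+d\varepsilon_0}{2d(2+\varepsilon_0)}\bigr)$ instead of $\beta(\varepsilon_0)$. The two agree only at $\varepsilon_0=0$, which is why your limiting check did not detect the error. With the corrected exponent the algebra closes exactly. Since $\frac{a\sigma}{d-3+4\sigma}=\frac{\varepsilon_0}{2(2+\varepsilon_0)}$, eliminating $\alpha$ gives $\frac{s\beta}{d}=\frac p2-\frac{\varepsilon_0}{2(2+\varepsilon_0)}-\frac{4+\varepsilon_0}{d(2+\varepsilon_0)}$, which is the stated $\beta(\varepsilon_0)$. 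Then $\alpha=p-a-\beta$ is the stated $\alpha(\varepsilon_0)$.

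Your last paragraph also misreads the hypothesis. A three-factor H\"older inequality needs only $a,\alpha,\beta\ge0$ and the two balance relations. You already secured these for small $\varepsilon_0$ from $\frac4d<p<\frac4{d-2s}$, so no ordering of the Lebesgue exponents is required. Moreover, the Morawetz spatial exponent $\frac{2(d-3+4\sigma)}{d-3+2\sigma}$ lies below $\frac{2d}{d-2s}$ if and only if $s\ge\frac{d\sigma}{d-3+4\sigma}$. The quantity $\frac{\sigma(d-2)}{d-3+4\sigma}$ is instead the scaling regularity of the $M_\sigma$ norm. The condition $\frac{\sigma(d-2)}{d-3+4\sigma}<s$ is never used in your argument, so drop that justification rather than rely on it.
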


Finally, we record the interpolation estimates used in the almost
conservation law.

\begin{lemma}\label{interpolation1}
Let $d\ge3$, $0<\sigma<1$, and $\frac4d<p<\frac4{d-2}$.
Define $\theta$ by
\begin{align}\label{parameter}
\theta=\begin{cases}
\dfrac{(1-s_c)(d-3+4\sigma)}{d-3+6\sigma-d\sigma},
 &\mbox{if }s_c\ge\dfrac{\sigma(d-2)}{d-3+4\sigma},\\[6pt]
\dfrac{s_c(d-3+4\sigma)}{\sigma(d-2)},
 &\mbox{if }s_c<\dfrac{\sigma(d-2)}{d-3+4\sigma},
\end{cases}
\end{align}
and choose
\[
\varepsilon=\frac{4p\sigma\theta}{d-3+4\sigma-2p\sigma\theta}>0.
\]
Then $0<\theta\le1$ and, on every time interval $I\subseteq\R$,
\begin{align}
&\|u\|_{L_t^{\frac{2p(2+\varepsilon)}{\varepsilon}}
 L_x^{\frac{dp(2+\varepsilon)}{4+\varepsilon}}(I\times\R^d)}\notag\\
&\quad\lesssim\|u\|_{M_\sigma(I\times\R^d)}^\theta
 \|\nabla u\|_{L_t^\infty L_x^2(I\times\R^d)}^{1-\theta},
 \qquad s_c\ge\frac{\sigma(d-2)}{d-3+4\sigma},
 \label{interpolation high power}\\
&\|u\|_{L_t^{\frac{2p(2+\varepsilon)}{\varepsilon}}
 L_x^{\frac{dp(2+\varepsilon)}{4+\varepsilon}}(I\times\R^d)}\notag\\
&\quad\le\|u\|_{M_\sigma(I\times\R^d)}^\theta
 \|u\|_{L_t^\infty L_x^2(I\times\R^d)}^{1-\theta},
 \qquad s_c\le\frac{\sigma(d-2)}{d-3+4\sigma}.
 \label{interpolation low power}
\end{align}
The implicit constant is independent of $I$.
\end{lemma}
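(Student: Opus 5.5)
The proof is a Hölder interpolation in the mixed Lebesgue scale, combined in the first case with the Sobolev embedding $\dot H^1\hookrightarrow L^{\frac{2d}{d-2}}$. Write $D:=d-3+4\sigma$. Then the $M_\sigma$ norm is $L_t^{a}L_x^{b}$ with $\frac1a=\frac{\sigma}{D}$ and $\frac1b=\frac{D-2\sigma}{2D}$. The second endpoint is $L_t^\infty L_x^{c}$, where $c=\frac{2d}{d-2}$ in case \eqref{interpolation high power} and $c=2$ in case \eqref{interpolation low power}. Hölder in $x$ and then in $t$ gives
\[
\|u\|_{L_t^qL_x^r}\le\|u\|_{L_t^aL_x^b}^\theta\|u\|_{L_t^\infty L_x^c}^{1-\theta}
\]
whenever $\frac1q=\frac\theta a$ and $\frac1r=\frac\theta b+\frac{1-\theta}c$. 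In the first case I then apply Sobolev to bound $\|u\|_{L_x^{2d/(d-2)}}\lesssim\|\nabla u\|_{L^2_x}$. In the second case no embedding is needed, which explains the constant $1$ in \eqref{interpolation low power}. So everything reduces to checking that the exponents produced by the stated choices of $\theta$ and $\varepsilon$ satisfy these two relations, together with $0<\theta\le1$.

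For the time exponent, substitute the chosen $\varepsilon$. One gets $2+\varepsilon=\frac{2D}{D-2p\sigma\theta}$, hence $\frac{\varepsilon}{2+\varepsilon}=\frac{2p\sigma\theta}{D}$ and
\[
\frac1q=\frac{\varepsilon}{2p(2+\varepsilon)}=\frac{\sigma\theta}{D}=\frac\theta a .
\]
So the time relation holds automatically for every $\theta$; this is exactly the reason for the choice of $\varepsilon$.

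For the space exponent, $\frac{4+\varepsilon}{2+\varepsilon}=2-\frac{2p\sigma\theta}{D}$. Combined with $\frac2p=\frac d2-s_c$, this gives
\[
\frac1r=\frac{1}{2}-\frac{s_c}{d}-\frac{2\sigma\theta}{dD}.
\]
I then compare this with $\frac\theta b+\frac{1-\theta}c$ in the two cases.
\begin{itemize}
\item For $c=2$, the right side equals $\frac12-\frac{\theta\sigma}{D}$. Equality becomes $\frac{s_c}{d}=\frac{\theta\sigma(d-2)}{dD}$, i.e. $\theta=\frac{s_cD}{\sigma(d-2)}$.
\item For $c=\frac{2d}{d-2}$, the right side equals $\frac12-\frac1d+\frac\theta d-\frac{\theta\sigma}{D}$. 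Equality becomes $\frac{1-s_c}{d}=\theta\frac{D+2\sigma-d\sigma}{dD}$, i.e. $\theta=\frac{(1-s_c)D}{d-3+6\sigma-d\sigma}$.
\end{itemize}
These are exactly the two formulas in \eqref{parameter}.

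The denominator $d-3+6\sigma-d\sigma=(d-3)(1-\sigma)+3\sigma$ is positive, so $\theta>0$ in both cases. A one-line rearrangement shows that in each case $\theta\le1$ is equivalent to the case condition, namely $s_c\gtrless\frac{\sigma(d-2)}{D}$. This matches the case split in \eqref{parameter}, and the two formulas agree ($\theta=1$) at the borderline.

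The step I expect to need the most care is admissibility: one must check that $\varepsilon>0$, i.e. $2p\sigma\theta<D$, so that the exponents $q,r$ are finite and at least $1$. The crude bound $p<\frac4{d-2}$ is not sufficient for this when $d=3$. I would instead use the explicit $\theta$ from \eqref{parameter} in each case, rewrite $\frac{2p\sigma\theta}{D}$ in terms of $p s_c=\frac{dp}2-2$ and $p$, and verify the inequality directly on the range $\frac4d<p<\frac4{d-2}$ in each case. If this fails somewhere, that would be a genuine restriction on $\sigma$ to be recorded when the lemma is applied.
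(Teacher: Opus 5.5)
Your proposal is the paper's proof: the same H\"older interpolation of $M_\sigma$ against $L_t^\infty L_x^{2d/(d-2)}$ (followed by Sobolev) or against $L_t^\infty L_x^2$, with the same exponent bookkeeping. Your derivation of $\varepsilon$ from the time relation is the correct one; the paper's proof misprints it as $\frac{\sigma\theta}{d-3+4\sigma-2p\sigma\theta}$ and simply asserts $\varepsilon>0$ without checking it.

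The one point you leave open is $2p\sigma\theta<D$, with $D=d-3+4\sigma$. It holds throughout, so no restriction on $\sigma$ arises.

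In the first case, since $2p(1-s_c)=4-(d-2)p$, the condition is equivalent to $\sigma(d-2)(1-p)<d-3$. This holds in each subcase:
\begin{itemize}
\item for $d=3$, because $p>\frac43>1$;
\item for $d\ge4$ and $p\ge1$, trivially;
\item for $p<1$ (which forces $d\ge5$), because $\sigma(d-2)(1-p)<\frac{(d-2)(d-4)}{d}<d-3$.
\end{itemize}

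In the second case, the condition is equivalent to $2ps_c<d-2$, i.e.\ $s_c<\frac{d(d-2)}{2(d+2)}$. This follows from $s_c<\frac{\sigma(d-2)}{D}\le\frac{d-2}{d+1}\le\frac{d(d-2)}{2(d+2)}$.
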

\begin{proof}
Recall that $s_c=\frac d2-\frac2p\in(0,1)$. Since
$
0<\frac{\sigma(d-2)}{d-3+4\sigma}<1,
$
the definition of $\theta$ gives $0<\theta\le1$.
We will prove the two estimates by interpolation. First, we consider the region $s_c\geq\frac{\sigma(d-2)}{d-3+4\sigma}$. For the time component, we need that 
\begin{align*}
    \frac{\varepsilon}{2p(2+\varepsilon)}=\frac{\sigma\theta}{d-3+4\sigma},
\end{align*}
which implies that $\varepsilon=\frac{\sigma\theta}{d-3+4\sigma-2p\sigma\theta}>0$. Next, we consider the spatial component. 
\begin{align}\label{interpolation-1}
    \frac{4+\varepsilon}{pd(2+\varepsilon)}=\frac{\theta(d-3+2\sigma)}{2(d-3+4\sigma)}+\frac{d-2}{2d}(1-\theta).
\end{align}
On the other hand, we have
\begin{align}\label{interpolation-2}
    \frac{4+\varepsilon}{pd(2+\varepsilon)}=\frac{1}{2}-\frac{s_c}{d}-\frac{ 2\sigma\theta}{d(d-3+4\sigma)}.
\end{align}
Putting \eqref{interpolation-1} and \eqref{interpolation-2} together, we have
\begin{align*}
    1-s_c=\theta\Big(1-\frac{\sigma(d-2)}{d-3+4\sigma}\Big),
\end{align*}
which is equivalent to $s_c\geq\frac{\sigma(d-2)}{d-3+4\sigma}$. 
It then follows from the H\"older inequality and Soblev embedding,
\begin{align*}
\|u\|_{L_t^{\frac{2p(2+\varepsilon)}{\varepsilon}}
 L_x^{\frac{dp(2+\varepsilon)}{4+\varepsilon}}(I\times\R^d)}&\le
 \|u\|_{L_t^{\frac{d-3+4\sigma}{\sigma}}
 L_x^{\frac{2(d-3+4\sigma)}{d-3+2\sigma}}(I\times\R^d)}^\theta
 \|u\|_{L_t^\infty L_x^{\frac{2d}{d-2}}(I\times\R^d)}^{1-\theta}\\
&\lesssim\|u\|_{M_\sigma(I\times\R^d)}^\theta
 \|\nabla u\|_{L_t^\infty L_x^2(I\times\R^d)}^{1-\theta}.
\end{align*}

If $s_c<\frac{\sigma(d-2)}{d-3+4\sigma}$, the similar calculation yields
$s_c=\frac{\theta\sigma(d-2)}{d-3+4\sigma}$, and consequently
\begin{align*}
\frac{4+\varepsilon}{dp(2+\varepsilon)}
&=\frac12-\frac{\sigma\theta}{d-3+4\sigma}=\frac{\theta(d-3+2\sigma)}{2(d-3+4\sigma)}
 +\frac{1-\theta}{2}.
\end{align*}
By the H\"older inequality, we have
\begin{align*}
\|u\|_{L_t^{\frac{2p(2+\varepsilon)}{\varepsilon}}
L_x^{\frac{dp(2+\varepsilon)}{4+\varepsilon}}(I\times\R^d)}&\le
 \|u\|_{L_t^{\frac{d-3+4\sigma}{\sigma}}
 L_x^{\frac{2(d-3+4\sigma)}{d-3+2\sigma}}(I\times\R^d)}^\theta
 \|u\|_{L_t^\infty L_x^2(I\times\R^d)}^{1-\theta}\\
&=\|u\|_{M_\sigma(I\times\R^d)}^\theta
 \|u\|_{L_t^\infty L_x^2(I\times\R^d)}^{1-\theta}.
\end{align*}
\end{proof}

\section{Proof of Theorem~\ref{scattering}}
\subsection{Global well-posedness}

We prove Theorem~\ref{scattering} using an almost conservation law
for $E(I_Nu)$. The function $I_Nu$ does not solve \eqref{equation},
so the energy conservation law for that equation does not apply
directly to $E(I_Nu)$. The following proposition controls its increment.

\begin{proposition}\label{energy increment scattering}
Let $d\ge3$, $\frac4d<p<\frac4{d-2}$, $s_c<s<1$, and $0<\sigma<1$.
There exist $\eta>0$ and $N_0\ge1$, depending only on $d,p,s,\sigma$,
with the following property. Let $N\ge N_0$ and let
$u\in C(J;H^s(\R^d))$ be a solution to \eqref{equation}
on $J=[t_0,T]$ with $\|\nabla I_Nu(t_0)\|_{L_x^2}\le1$.
Suppose that
\begin{align}\label{mass independent smallness}
\|u\|_{M_\sigma(J\times\R^d)}
\le\eta(1+\|u(t_0)\|_{L_x^2})^{-b/s_c},
\end{align}
where $b=(\frac{\sigma(d-2)}{d-3+4\sigma}-s_c)_+$.
Then the norm $Z_I(J)$ defined in \eqref{ZI} satisfies
$Z_I(J)\lesssim1$ and
\begin{equation}\label{new energy increment}
 \sup_{t\in J}|E(I_Nu(t))-E(I_Nu(t_0))|
 \lesssim N^{-\delta},
\end{equation}
where $\delta=\nu(1-s_c)$.
\end{proposition}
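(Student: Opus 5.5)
The proof has two parts, in the usual $I$-method order: first a bootstrap showing $Z_I(J)\lesssim1$, then an estimate of the energy increment through the commutator of Lemma~\ref{heat commutator}.

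\textbf{Step 1: the bound $Z_I(J)\lesssim1$.} Apply $\nabla I_N$ to \eqref{equation}. Lemma~\ref{lemma linear strichartz} with the dual pair $(2,\frac{2d}{d+2})$ gives
\[
Z_I(J)\lesssim\|\nabla I_Nu(t_0)\|_{L^2}+\|\nabla I_NF(u)\|_{L_t^2L_x^{\frac{2d}{d+2}}}.
\]
We split $\nabla I_NF(u)=\nabla F(I_Nu)+\nabla[I_NF(u)-F(I_Nu)]$.

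For the first piece we use the chain rule $\nabla F(I_Nu)=\nabla I_Nu\cdot F'(I_Nu)$. Hölder's inequality with the pairs $(2+\varepsilon,\frac{2d(2+\varepsilon)}{d(2+\varepsilon)-4})$ and $(\frac{2p(2+\varepsilon)}{\varepsilon},\frac{dp(2+\varepsilon)}{4+\varepsilon})$ bounds it by
\[
Z_I\,\|I_Nu\|^p_{L_t^{\frac{2p(2+\varepsilon)}{\varepsilon}}L_x^{\frac{dp(2+\varepsilon)}{4+\varepsilon}}}.
\]
By contractivity \eqref{i1} this is at most $Z_I\,\|u\|^p$ in the same norm.

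For the commutator piece, \eqref{gradient commutator estimate} gives the bound $N^{-\delta}\|u\|^{p-\nu}Z_I^{1+\nu}$.

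It remains to control $X:=\|u\|_{L_t^{\frac{2p(2+\varepsilon)}{\varepsilon}}L_x^{\frac{dp(2+\varepsilon)}{4+\varepsilon}}}$. I use Lemma~\ref{interpolation1} with the $\varepsilon$ chosen there, which gives $X\lesssim \|u\|_{M_\sigma}^\theta K^{1-\theta}$. Here $K$ depends on the case:
\begin{itemize}
\item if $s_c\ge\frac{\sigma(d-2)}{d-3+4\sigma}$, then $K=\|\nabla u\|_{L^\infty L^2}$;
\item otherwise $K=\|u\|_{L^\infty L^2}$.
\end{itemize}

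The $L^2$ factor is the conserved mass, $\|u(t)\|_{L^2}=\|u(t_0)\|_{L^2}$. In the second case $\theta=s_c(d-3+4\sigma)/(\sigma(d-2))$, so the mass power $1-\theta$ matches the exponent $b/s_c$ in \eqref{mass independent smallness} after raising to $\theta$. Hence $X\lesssim\eta^\theta$.

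The gradient factor in the first case is the delicate point, since $u$ itself is not in $H^1$. I would instead interpolate using the decomposition $u=P_{\le N}u+P_{>N}u$. The low part is controlled by $\|\nabla I_Nu\|_{L^\infty L^2}\le Z_I$. The high part is controlled by \eqref{i2} at regularity $s_c$, which only costs $N^{s_c-1}Z_I$. Equivalently, one repeats the Hölder/Sobolev argument of Lemma~\ref{interpolation1} applied separately to the two pieces. This yields
\[
X\lesssim\eta^\theta Z_I^{1-\theta}+N^{-(1-s_c)}Z_I .
\]

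Collecting the bounds gives, by continuity in time,
\[
Z_I\lesssim 1+(\eta^\theta Z_I^{1-\theta}+N^{-c}Z_I)^pZ_I+N^{-\delta}(\cdots)^{p-\nu}Z_I^{1+\nu}.
\]
The standard bootstrap (the right side is continuous in $T$, and smallness comes from $\eta$ and $N\ge N_0$) then yields $Z_I(J)\lesssim1$.

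\textbf{Step 2: the energy increment.} For Schwartz solutions, using the equation $iI_Nu_t+\Delta I_Nu=F(I_Nu)+[I_NF(u)-F(I_Nu)]$,
\[
\partial_tE(I_Nu)=\mathrm{Re}\int\overline{\partial_tI_Nu}\,\bigl(-\Delta I_Nu+F(I_Nu)\bigr)\,dx
=\mathrm{Re}\int\overline{I_Nu_t}\,\bigl[F(I_Nu)-I_NF(u)\bigr]\,dx .
\]
Substitute $I_Nu_t=i\Delta I_Nu-iI_NF(u)$ and integrate over $J$. This gives two terms.

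\emph{Term (a).} It is $\int_J\int\nabla\overline{I_Nu}\cdot\nabla[\mathcal C_N]$, where $\mathcal C_N=I_NF(u)-F(I_Nu)$. Bound $\nabla I_Nu$ in $L_t^2L_x^{\frac{2d}{d-2}}$ by $Z_I$ and $\nabla\mathcal C_N$ in the dual norm by \eqref{gradient commutator estimate}. The result is $\lesssim N^{-\delta}X^{p-\nu}Z_I^{2+\nu}\lesssim N^{-\delta}$.

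\emph{Term (b).} It is $\int_J\int\overline{I_NF(u)}\,\mathcal C_N$. Split $I_NF(u)=F(I_Nu)+\mathcal C_N$.
\begin{itemize}
\item The part $\int\overline{F(I_Nu)}\,\mathcal C_N$ is handled by putting $F(I_Nu)$ in $L_t^2L_x^{\frac{2d}{d-2}}$ and $\mathcal C_N$ in $L_t^2L_x^{\frac{2d}{d+2}}$ via \eqref{zero commutator estimate}. To estimate $F(I_Nu)$ in $L_t^2L_x^{\frac{2d}{d-2}}$, apply Sobolev embedding, which reduces it to $\|\nabla F(I_Nu)\|_{L_t^2L_x^{\frac{2d}{d+2}}}$; this was already estimated by $X^pZ_I$ in Step 1.
\item The $|\mathcal C_N|^2$ part is treated identically.
\end{itemize}
Both parts are $\lesssim N^{-1-\delta}$. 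Density in $H^s$ and local well-posedness then extend \eqref{new energy increment} to general solutions.

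\textbf{Main obstacle.} I expect the main obstacle to be Step 1 in the regime $s_c\ge\frac{\sigma(d-2)}{d-3+4\sigma}$. There one must replace $\|\nabla u\|_{L^\infty L^2}$ in \eqref{interpolation high power} by quantities controlled through $I_N$ without losing powers of $N$. The plan is to apply the interpolation of Lemma~\ref{interpolation1} to $P_{\le N}u$ and $P_{>N}u$ separately, with $\|P_{\le N}u\|_{M_\sigma}\le\|u\|_{M_\sigma}$ up to constants.
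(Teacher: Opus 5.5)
Your Step 1 and term (a) of Step 2 follow the paper's argument. That covers the Strichartz estimate with the dual endpoint pair, the chain rule plus H\"older, Lemma~\ref{heat commutator}, and Lemma~\ref{interpolation1}, applied to $P_{\le N}u$ in the first regime and directly to $u$ with mass conservation in the second, where indeed $b\theta/s_c=1-\theta$.

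\textbf{The gap is in term (b).} You claim that Sobolev embedding reduces $\|F(I_Nu)\|_{L_t^2L_x^{2d/(d-2)}}$ to $\|\nabla F(I_Nu)\|_{L_t^2L_x^{2d/(d+2)}}$, and this is false. One derivative from $L^{\frac{2d}{d+2}}$ only reaches $L^2$, since $\frac{d+2}{2d}-\frac1d=\frac12$, and $\|f\|_{L^{2d/(d-2)}}\lesssim\|\nabla f\|_{L^{2d/(d+2)}}$ fails under scaling. Nor can $F(I_Nu)\in L_t^2L_x^{2d/(d-2)}$ be obtained another way from what you control. It would require $I_Nu\in L_t^{2(p+1)}L_x^{2d(p+1)/(d-2)}$, a space at scaling regularity $\frac{dp}{2(p+1)}$. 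That exceeds the $\dot H^1$ level controlled by $Z_I$ as soon as $p>\frac{2}{d-2}$, which holds for every admissible $p$ when $d\ge4$. The mass cannot help either, since it is not uniformly bounded. So the pairing with \eqref{zero commutator estimate} and the claimed $N^{-1-\delta}$ bound are unsupported.

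\textbf{The fix.} Put the derivative on the commutator and drop the zero-order estimate. This gives $N^{-\delta}$ instead of $N^{-1-\delta}$, which is all the proposition asserts. For instance, use the correct embedding $\dot W^{1,\frac{2d}{d+2}}\hookrightarrow L^2$ on both factors:
\begin{align*}
\Bigl|\int_J\!\int_{\R^d}\overline{F(I_Nu)}\,\mathcal C_N\,dx\,dt\Bigr|
&\le\|F(I_Nu)\|_{L^2_{t,x}}\|\mathcal C_N\|_{L^2_{t,x}}
\lesssim\|\nabla F(I_Nu)\|_{L_t^2L_x^{\frac{2d}{d+2}}}\|\nabla\mathcal C_N\|_{L_t^2L_x^{\frac{2d}{d+2}}}\\
&\lesssim X^pZ_I\cdot N^{-\delta}X^{p-\nu}Z_I^{1+\nu}\lesssim N^{-\delta}.
\end{align*}
The paper does essentially this in a different form. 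It first uses $\Im\int|\mathcal C_N|^2=0$ to write the potential term with $I_NF(u)$. It then moves one derivative, $\int I_NF(u)\overline{\mathcal C_N}=\int|\nabla|^{-1}I_NF(u)\,|\nabla|\overline{\mathcal C_N}$, and applies the two-derivative Hardy--Littlewood--Sobolev bound $\||\nabla|^{-1}g\|_{L^{2d/(d-2)}}\lesssim\|\nabla g\|_{L^{2d/(d+2)}}$. Note also that your $|\mathcal C_N|^2$ piece needs no estimate: it enters only through an imaginary part and vanishes.
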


Therefore, with Proposition \ref{energy increment scattering} at place, the proof of global well-posedness has been reduced to showing
\begin{align}\label{global Morawetz}
\|u\|_{M_\sigma(\R\times\R^d)}\leq C(\|u_0\|_{H_x^s}).
\end{align}
This also implies scattering, as we will show below.

Recall that interpolating between the interaction Morawetz inequality 
\begin{align*}
    \big\||\nabla|^{-\frac{d-3}{4}}u\big\|_{L_{t,x}^4(I\times\R^d)}\lesssim \|u_0\|_{L^2}^\frac12\|u\|_{L_t^\infty\dot H^\frac12(I\times\R^d)}^\frac12
\end{align*}
and $L_t^\infty \dot H_x^\sigma(I\times\R^d)$,
$0<\sigma\leq s$, we get
\begin{equation}\label{inter Mora esti}
\|u\|_{M_\sigma(I\times\R^d)}\lesssim \bigl(\|u\|_{L_t^\infty L_x^2(I\times\R^d)}\|u\|_{L_t^\infty\dot H_x^{\frac12}(I\times\R^d)}\bigr)^{\frac {2\sigma}{d-3+4\sigma}}\|u\|_{L_t^\infty\dot H^\sigma_x (I\times\R^d)}^{\frac{d-3}{d-3+4\sigma}}
\end{equation}
on any spacetime slab $I\times\R^d$ on which the solution to \eqref{equation} exists and lies in $H_x^{\max\{\frac 12,\sigma\}}$.
However, the $H_x^{\max\{\frac 12,\sigma\}}$-norm of the solution is not conserved. To control it, we must resort the $H^s_x$ bound on the solution.  Thus, in order to obtain a global Morawetz estimate we need a global $H^s_x$ bound, which 
will proceed by a bootstrap argument.

Let $u(x,t)$ be the solution to \eqref{equation}.  As $E(I_Nu_0)$ is not necessarily small, we will rescale the solution such that the energy of the
rescaled initial data satisfies the hypothesis of Proposition~\ref{energy increment scattering}.  Indeed, by scaling,
$$
u^{\lambda}(x,t):=\lambda^{-\frac 2p}u\Bigl(\frac x{\lambda},\frac t{\lambda^2}\Bigr)
$$
is a solution to \eqref{equation} with initial data
$$
u_0^{\lambda}:=\lambda^{-\frac 2p}u_0\Bigl(\frac x{\lambda}\Bigr).
$$
By Proposition~\ref{basic property} and Sobolev embedding,
\begin{align*}
&\|\nabla I_Nu_0^\lambda\|_{L_x^2}\lesssim N^{1-s}\|u_0^\lambda\|_{\dot H^s_x}=N^{1-s}\lambda^{s_c-s} \|u_0\|_{\dot H^s_x}.
\end{align*}
Choose
\begin{equation}\label{initial scaling choice}
 \lambda=K N^{\frac{1-s}{s-s_c}},\qquad K\ge1.
\end{equation}
The right side is $K^{s_c-s}\|u_0\|_{\dot H_x^s}$, which is small
when $K$ is sufficiently large depending on the initial data.
If $s\le\frac{dp}{2(2+p)}$, the symbol comparison gives
\[
 |\xi|^\frac{dp}{2(2+p)} m_N(\xi)\lesssim N^{\frac{dp}{2(2+p)}-s}|\xi|^s.
\]
Indeed, below $N$ we use $|\xi|^{\frac{dp}{2(2+p)}-s}\le N^{\frac{dp}{2(2+p)}-s}$.
Above $N$ we use $|\xi|^{\frac{dp}{2(2+p)}-1}\le N^{\frac{dp}{2(2+p)}-1}$.
Consequently Sobolev embedding yields
\begin{align}\label{modified potential bound}
 \|I_Nu_0^\lambda\|_{L_x^{p+2}}
 &\lesssim\||\nabla|^\frac{dp}{2(2+p)} I_Nu_0^\lambda\|_{L_x^2}
 \lesssim N^{\frac{dp}{2(2+p)}-s}\lambda^{s_c-s}\|u_0\|_{\dot H^s}\notag\\
 &=K^{s_c-s}N^{\frac{dp}{2(2+p)}-1}\|u_0\|_{\dot H^s}\longrightarrow0.
\end{align}
If $s>\frac{dp}{2(p+2)}$, by the $L^p$-boundedness of $I_N$ and Sobolev embedding:
\[
 \|I_Nu_0^\lambda\|_{L_x^{p+2}}
 \le\|u_0^\lambda\|_{L_x^{p+2}}
 \lesssim\lambda^{s_c-\frac{dp}{2(2+p)}}\|u_0\|_{H^s}\longrightarrow0.
\]
Thus, after fixing $K$ and increasing $N$,
\begin{equation}\label{small initial modified energy}
 E(I_Nu_0^\lambda)\le\tfrac18.
\end{equation}

We now show that there exists an absolute constant $C_1$ such that
\begin{align}\label{rescaled Mora}
\|u^{\lambda}\|_{M_\sigma(\R\times\R^d)}\le C_1\lambda^{\frac {s_c[d-3-\sigma(d-6)]}{d-3+4\sigma}}.
\end{align}
Undoing the scaling, this yields \eqref{global Morawetz}.

By time reversal symmetry, it suffices to argue for positive times only.  Define
\begin{equation*}
\Omega_1:=\{t\in[0,\infty): \ \|u^{\lambda}\|_{M_\sigma([0,t]\times\R^d)}\le C_1\lambda^{\frac {s_c[d-3-\sigma(d-6)]}{d-3+4\sigma}}\}.
\end{equation*}
We want to show that $\Omega_1=[0,\infty)$.  We achieve this via a bootstrap argument.  Let
$$
\Omega_2:=\{t\in[0,\infty): \ \|u^{\lambda}\|_{M_\sigma([0,t]\times\R^d)}\le 2C_1\lambda^{\frac {s_c[d-3-\sigma(d-6)]}{d-3+4\sigma}}\}.
$$
In order to run the bootstrap argument successfully, we need to verify four things:
\begin{itemize}
    \item[(1)] $\Omega_1$ is nonempty,
    \item[(2)]
$\Omega_1$ is closed
\item[(3)] $\Omega_2\subset\Omega_1$,
    \item[(4)]  If $T\in \Omega_1$, then there exists $\eps>0$ such that $[T,T+\eps)\subset\Omega_2$.
\end{itemize}
The claims $(1)$ and $(2)$  are trivial. 
We now show $(3)$.  Let $T\in \Omega_2$, we will show that $T\in \Omega_1$.  By \eqref{inter Mora esti} and \eqref{mass}, we obtain
\begin{align}
\|u^{\lambda}\|_{M_\sigma([0,T]\times\R^d)}
&\lesssim \bigl(\|u_0^\lambda\|_{L_x^2}\|u^\lambda\|_{L_t^\infty\dot H_x^{\frac12}([0,T]\times\R^d)}\bigr)^{\frac {2\sigma}{d-3+4\sigma}}\|u^\lambda\|_{L_t^\infty\dot H^\sigma_x ([0,T]\times\R^d)}^{\frac{d-3}{d-3+4\sigma}}\nonumber\\
&\lesssim C(\|u_0\|_{L_x^2})\lambda^{\frac {2\sigma s_c}{d-3+4\sigma}}\|u^{\lambda}\|_{L_t^\infty\dot H_x^{\frac12}([0,T]\times\R^d)}^{\frac {2\sigma}{d-3+4\sigma}}\|u^\lambda\|_{L_t^\infty\dot H^\sigma_x ([0,T]\times\R^d)}^{\frac{d-3}{d-3+4\sigma}}.\label{Morawetz on 0 T}
\end{align}
To control the last two factors, we decompose $u^\lambda(t)$ into
$$
u^{\lambda}(t):=P_{\le N}u^{\lambda}(t)+P_{>N}u^{\lambda}(t).
$$
To estimate the low frequencies, we interpolate between the $L_x^2$-norm
and $\dot H_x^1$-norm and use 
\eqref{symbol comparison}:
\begin{align}
\|P_{\le N}u^{\lambda}(t)\|_{\dot H^\sigma_x}
&\lesssim \|P_{\le N}u^{\lambda}(t)\|_{L_x^2}^{1-\sigma} \|P_{\le N}u^{\lambda}(t)\|_{\dot H_x^1}^\sigma\notag\\
&\lesssim \lambda^{s_c(1-\sigma)}C(\|u_0\|_{L_x^2})\|Iu^{\lambda}(t)\|_{\dot H_x^1}^\sigma\\
\|P_{\le N}u^{\lambda}(t)\|_{\dot H^{\frac 12}_x}
&\lesssim \|P_{\le N}u^{\lambda}(t)\|_{L_x^2}^{\frac 12} \|P_{\le N}u^{\lambda}(t)\|_{\dot H_x^1}^{\frac 12}\notag\\
&\lesssim \lambda^{\frac {s_c}2}C(\|u_0\|_{L_x^2})\|I_Nu^{\lambda}(t)\|_{\dot H_x^1}^{\frac 12}.\label{low}
\end{align}
For the high frequency part, by  interpolating between $L_x^2$ and $\dot H^s_x$, we get
\begin{align}
\|P_{> N}u^{\lambda}(t)\|_{\dot H_x^\sigma}
&\lesssim \|P_{> N}u^{\lambda}(t)\|_{L_x^2}^{1-\frac \sigma{s}}\|P_{> N}u^{\lambda}(t)\|_{\dot H^s_x}^{\frac \sigma{s}} \notag\\
&\lesssim \lambda^{s_c(1-\frac{\sigma}{s})}\|u_0\|_{L_x^2}^{1-\frac{\sigma}{s}}N^{\frac{\sigma(s-1)}{s}}\|I_N u^{\lambda}(t)\|_{\dot H_x^1}^{\frac \sigma{s}}\notag\\
&\lesssim \lambda^{s_c(1-\sigma)}C(\|u_0\|_{L_x^2})\|I_Nu^{\lambda}(t)\|_{\dot H_x^1}^{\frac \sigma{s}}\\
\|P_{> N}u^{\lambda}(t)\|_{\dot H_x^{\frac 12}}
&\lesssim \|P_{> N}u^{\lambda}(t)\|_{L_x^2}^{1-\frac 1{2s}}\|P_{> N}u^{\lambda}(t)\|_{\dot H^s_x}^{\frac 1{2s}} \notag\\
&\lesssim \lambda^{(1-\frac 1{2s})s_c}N^{\frac {s-1}{2s}}\|u_0\|_{L_x^2}^{1-\frac 1{2s}}\|I_Nu^{\lambda}(t)\|_{\dot H_x^1}^{\frac 1{2s}}\nonumber\\
&\lesssim \lambda^{\frac {s_c}2}C(\|u_0\|_{L_x^2})\|I_Nu^{\lambda}(t)\|_{\dot H_x^1}^{\frac 1{2s}}.\label{high}
\end{align}
Putting these estimates together, we obtain
\begin{align}
\|u^{\lambda}\|_{M_\sigma([0,T]\times\R^d)}\lesssim C(&\|u_0\|_{L_x^2})\lambda^{\frac {s_c[d-3-\sigma(d-6)]}{d-3+4\sigma}}\label{u lambda Morawetz}\notag\\
&\times\sup_{[0,T]}\bigl(\|\nabla I_N u^{\lambda}(t)\|_{L_x^2}^{\frac 12}+\|\nabla I_N u^{\lambda}(t)\|_{L_x^2}^{\frac 1{2s}}\bigr)^{\frac {2\sigma}{d-3+4\sigma}}\notag\\
&\times\sup_{[0,T]}\bigl(\|\nabla I_N u^{\lambda}(t)\|_{L_x^2}^\sigma+\|\nabla I_N u^{\lambda}(t)\|_{L_x^2}^{\frac{\sigma}{s}}\bigr)^{\frac {d-3}{d-3+4\sigma}}.
\end{align}
Thus, taking $C_1$ sufficiently large depending on $\|u_0\|_{L_x^2}$, as a consequence of the following claim,\begin{equation}\label{bdd kinetic}
\sup_{[0,T]}\|\nabla I_N u^{\lambda}(t)\|_{L_x^2}\le 1.
\end{equation} we get $T\in \Omega_1$.

 We next show that $T\in \Omega_2$ yields \eqref{bdd kinetic}.  By Proposition \ref{energy increment scattering}, we take the rescale parameter $$\eta_\lambda=\eta(1+\lambda^{s_c}\|u_0\|_{L^2})^{-\frac{b}{s_c}}$$ where $b=(\frac{\sigma(d-2)}{d-3+4\sigma}-s_c)_+$. Splitting the time interval $[0,T]$ into several subintervals $I_j=[t_j,t_{j+1}]$ on which $\|u^\lambda\|_{M_\sigma(I_j\times\R^d)}\leq\eta_\lambda$. Mass conservation gives
$\|u^\lambda(t_j)\|_{L_x^2}=\lambda^{s_c}\|u_0\|_{L_x^2}$, so each $I_j$
satisfies \eqref{mass independent smallness} for $u^\lambda$.
The number of such subintervals can be bounded by
\begin{align*}
  L\leq 1+C\eta_\lambda^{-\frac{d-3+4\sigma}{\sigma}}\lambda^{\frac{s_cA_d(\sigma)}{\sigma}}\leq C(d,p,s,\|u_0\|_{L^2})\lambda^{\max\{\frac{s_cA_d(\sigma)}{\sigma},(d-2)(1-s_c)\}}.
\end{align*}
Next, utilizing Proposition \ref{energy increment scattering} on each subinterval $I_j$ again, we obtain
\begin{align}\label{energy-increment-3}
  \sup_{0\leq t\leq T}E(I_Nu^\lambda(t))\leq E(I_Nu_0^\lambda)+C(E(I_Nu_0^\lambda))LN^{-\delta}.
\end{align}
Taking $\lambda\sim N^{\frac{1-s}{s-s_c}}$, the second term in
\eqref{energy-increment-3} tends to zero as $N\to\infty$ provided
\[
\frac{\kappa(1-s)}{s-s_c}-\delta<0,
\qquad
\kappa=\max\left\{\frac{s_cA_d(\sigma)}{\sigma},(d-2)(1-s_c)\right\}.
\]
Choosing $\sigma=s$ and using $s>s_c>0$, this condition is equivalent to
\begin{align*}
  s_c(1-s)A_d(s)<\delta s(s-s_c), \\
(d-2)(1-s_c)(1-s)<\delta(s-s_c).
\end{align*}
By \eqref{rho definition} and the monotonicity used to define $\rho_1$,
the first inequality is equivalent to $s>\rho_1(d,p)$.
Since $\delta=\nu(1-s_c)$, the second inequality is equivalent to
\[
(d-2)(1-s)<\nu(s-s_c)
\quad\Longleftrightarrow\quad
s>\frac{d-2+\nu s_c}{d-2+\nu}=\rho_2(d,p).
\]
Thus both inequalities hold precisely when $s>s_*(d,p)$, as required
in \eqref{main threshold}.
By using the bootstrap assumption and induction, we obtain
$E(I_Nu^\lambda(t))\leq\frac14$ for $t\in[0,T]$.
In particular, the hypothesis on
$\|\nabla I_Nu^\lambda(t_j)\|_{L_x^2}$ is satisfied at every step,
and hence \eqref{bdd kinetic} holds.

At time $t=T$, $u^\lambda(T)\in H^s(\R^d)$ and enjoys the bound \eqref{bdd kinetic}. By the local well-posedness, it can be extended to the interval $[T,T+\varepsilon)$ for $\varepsilon>0$ sufficiently small. By the continuity, it also holds $$\sup_{t\in[T,T+\varepsilon)}\|\nabla I_Nu^\lambda\|_{L^2(\R^d)}\leq1.$$
Since $T\in\Omega_1$, we have $$\|u^{\lambda}\|_{M_\sigma([0,T]\times\R^d)}\le C_1\lambda^{\frac {s_c[d-3-\sigma(d-6)]}{d-3+4\sigma}}.$$
On the other hand, there exists $\eta>0$ sufficiently small such that 
$\|u^\lambda\|_{M_\sigma([T,T+\varepsilon)\times\R^d}<\eta$, then we have
\begin{align*}
    \|u^{\lambda}\|_{M_\sigma([0,T+\varepsilon)\times\R^d)}\le 2C_1\lambda^{\frac {s_c[d-3-\sigma(d-6)]}{d-3+4\sigma}},
\end{align*}
which means that $[T,T+\varepsilon)\in\Omega_2$.

Thus, the bootstrap is complete and we have \eqref{rescaled Mora}.  Hence, \eqref{bdd kinetic} holds for all $T\in \R$.  By using \eqref{i3}
and the conservation of mass, it  implies
\begin{align*}
\|u(T)\|_{H_x^s}
&\lesssim \|u_0\|_{L_x^2} + \|u(T)\|_{\dot H^s_x}\\
&\lesssim \|u_0\|_{L_x^2} + \lambda^{s-s_c}\|u^\lambda(\lambda^2T)\|_{\dot H^s_x}\\
&\lesssim \|u_0\|_{L_x^2} + \lambda^{s-s_c}\|Iu^\lambda(\lambda^2T)\|_{H^1_x}\\
&\lesssim \|u_0\|_{L_x^2} + \lambda^{s-s_c}\bigl(\|u^\lambda(\lambda^2T)\|_{L^2_x}+\|\nabla I_Nu^\lambda(\lambda^2T)\|_{L_x^2}\bigr)\\
&\lesssim \|u_0\|_{L_x^2} + \lambda^{s-s_c}(\lambda^{s_c}\|u_0\|_{L_x^2}+1)\\
&\leq C(\|u_0\|_{H^s_x}),
\end{align*}
for all $T\in \R$.  Therefore,
\begin{align}\label{global H^s}
\|u\|_{L_t^\infty H^s_x(\R\times\R^d)}\leq C(\|u_0\|_{H^s_x}).
\end{align}

\subsection{Scattering}For completeness, we give the detailed proof of the scattering from the global bound in $M_\sigma$. We proceed by following the argument in Vi\c{s}an and Zhang \cite{VZ07}.
We first show that the global space-time estimate \eqref{global Morawetz} can be upgraded to the global Strichartz bound
\begin{equation}\label{w bound}
\|u\|_{W(\R)}:=\sup_{(q,r)\ {\rm admissible}}\|\langle\nabla\rangle^s u\|_{L_t^qL_x^r(\R\times\R^d)}\leq C(\|u_0\|_{H^s_x}).
\end{equation}
The second step is to use this estimate to prove asymptotic completeness of the wave operator. The existence of the wave operators is rather standard and we omit it.

Let $u$ be a global solution to \eqref{equation} under the hypotheses of
Theorem~\ref{scattering}.  By \eqref{global Morawetz} we have
$$
\|u\|_{M_\sigma(\R\times\R^d)}\le C(\|u_0\|_{H^s_x}).
$$
Let $\eta_0>0$ be a small constant to be chosen momentarily and split $\R$ into $L\le 1+C(\|u_0\|_{H^s_x})\eta_0^{-(d-3+4\sigma)/\sigma}$ subintervals $I_j=[t_j,t_{j+1}]$ such that
$$
\|u\|_{M_\sigma(I_j\times\R^d)}\le \eta_0.
$$
By Strichartz,
\begin{equation}\label{wuij}
\|u\|_{W(I_j)}
\lesssim\|\langle\nabla\rangle^su(t_j)\|_{L_x^2}+\|\langle\nabla\rangle^s(|u|^pu)\|_{L_t^2L_x^{\frac {2d}{d+2}}(I_j\times\R^d)}.
\end{equation}
We take $\sigma=s$. Since $s>\frac12$ and $s>s_c$, the hypotheses
of Lemma~\ref{use Morawetz} hold. Choose $\varepsilon_0>0$ as in that
lemma, independently of the parameter used in Proposition~\ref{energy increment scattering}.
Using \eqref{global H^s}, we control the nonlinearity as follows:
\begin{align}
\|\langle\nabla\rangle^s(|u|^pu)\|_{L_t^2L_x^{\frac {2d}{d+2}}(I_j\times\R^d)}
&\lesssim \|u\|_{W(I_j)}\|u\|_{M_\sigma(I_j\times \R^d)}^{\frac{\varepsilon_0(d-3+4\sigma)}{2\sigma(2+\varepsilon_0)}}\|u\|^{\alpha(\varepsilon_0)+\beta(\varepsilon_0)}_{L_t^{\infty} H_x^s(I_j\times \R^d)}\notag\\
&\lesssim \|u\|_{W(I_j)}\eta_0^{\frac{\varepsilon_0(d-3+4\sigma)}{2\sigma(2+\varepsilon_0)}} C(\|u_0\|_{H^s_x}).\label{nonlinearity}
\end{align}
Taking $\eta_0$ sufficiently small depending only on $\|u_0\|_{H^s_x}$, \eqref{wuij} and \eqref{nonlinearity} yield
$$
\|u\|_{W(I_j)}\lesssim \|\langle\nabla\rangle^su(t_j)\|_{L^2}.
$$
Adding these bounds over all subintervals $I_j$, we obtain \eqref{w bound}.

We now use \eqref{w bound} to show asymptotic completeness, i.e., there exist unique $u_{\pm}\in H^s_x$ such that
$$
\lim_{t\to \pm\infty}\|u(t)-e^{it\Delta}u_{\pm}\|_{H^s_x}=0.
$$
By time reversal symmetry, it suffices to argue in the positive time direction.  For $t>0$ define $v(t) = e^{-it\Delta}u(t)$. We will show
that $v(t)$ converges in $H^s_x$ as $t\rightarrow \infty$, and define $u_+$ to be that limit.

Indeed, from Duhamel's formula \eqref{duhamel} we have
\begin{align}\label{v}
v(t) = u_0 - i\int_{0}^{t} e^{-is\Delta}(|u|^pu)(s)ds.
\end{align}
Therefore, for $0<\tau<t$,
$$
v(t)-v(\tau)=-i\int_{\tau}^{t}e^{-is\Delta}(|u|^pu)(s)ds.
$$
By Strichartz and Lemma~\ref{use Morawetz}, we estimate
\begin{align*}
\|v(t)-v(\tau)\|_{H^s_x}
&=\|e^{it\Delta}[v(t)-v(\tau)]\|_{H^s_x} \\
&\lesssim \|\langle\nabla\rangle^s(|u|^pu)\|_{L_t^2L_x^{\frac {2d}{d+2}}([\tau,t]\times\R^d)}\\
&\lesssim \|u\|_{W([\tau,t])}\|u\|_{M_\sigma([\tau,t]\times\R^d)}^{\frac{\varepsilon_0(d-3+4\sigma)}{2\sigma(2+\varepsilon_0)}}\|u\|^{\alpha(\varepsilon_0)+\beta(\varepsilon_0)}_{L_t^{\infty} H_x^s([\tau,t]\times\R^d)}.
\end{align*}
Using \eqref{global Morawetz}, \eqref{global H^s}, and \eqref{w bound}, we obtain
\begin{center}
$\|v(t)-v(\tau)\|_{H^s_x}\rightarrow 0 \quad$ as $\tau,t \rightarrow \infty$.
\end{center}
In particular, this Cauchy sequence implies that $u_{+}$ is well defined. Also, inspecting \eqref{v} one easily sees that
\begin{align}
u_{+}=u_0- i\int_{0}^{\infty}e^{-is\Delta}(|u|^pu)(s)ds
\end{align}
and thus
\begin{align}\label{u+}
e^{it\Delta}u_{+}=e^{it\Delta}u_0- i\int_{0}^{\infty}e^{i(t-s)\Delta}(|u|^pu)(s)ds.
\end{align}
By the same arguments as above, \eqref{u+} and Duhamel's formula \eqref{duhamel} imply that
$\|u(t)-e^{it\Delta}u_{+}\|_{H^s_x}\rightarrow 0$ as $t\rightarrow\infty$.

\subsection{Proof of Proposition~\ref{energy increment scattering}}Now, we give the proof of Proposition \ref{energy increment scattering}. For convenience, we write $w:=I_Nu$ and the commutator
\begin{equation}\label{commutator definition}
\mathcal C_N:=I_NF(u)-F(w).
\end{equation}
Then, $w$ satisfies the following equation
\begin{equation}\label{modified equation}
iw_t+\Delta w=F(w)+\mathcal C_N.
\end{equation}
Throughout this proof, $J=[t_0,T]$ and $Z_I(J)$ denotes the norm in \eqref{ZI} evaluated on $J\times\R^d$.
Using Lemma \ref{interpolation1} and the high-low decomposition as in \cite{VZ07}, we claim that 
\begin{equation}\label{estimate of u}
\|u\|_{L_t^{\frac{2p(2+\varepsilon)}{\varepsilon}}L_x^{\frac{dp(2+\varepsilon)}{4+\varepsilon}}(J\times\R^d)}\lesssim\eta^\theta(1+Z_I(J))^{1-\theta}
 +N^{s_c-1}Z_I(J).
\end{equation}
In fact, if $s_c\ge \frac{\sigma(d-2)}{d-3+4\sigma}$, we 
apply \eqref{interpolation high power} to $P_{\le N}u$
\[
\begin{aligned}
\|P_{\leq N}u\|_{L_t^{\frac{2p(2+\varepsilon)}{\varepsilon}}L_x^{\frac{dp(2+\varepsilon)}{4+\varepsilon}}(J\times\R^d)}&\lesssim\|P_{\leq N}u\|_{M_\sigma(J\times\R^d)}\|\nabla P_{\leq N}u\|_{L_t^\infty L_x^2(J\times\R^d)}\\
&\lesssim\eta^\theta Z_I(J)^{1-\theta}.
\end{aligned}
\]
For the high frequency part $P_{>N}u$,  Sobolev embedding and \eqref{i2} with
$\gamma=s_c<s$ give
\[
 \begin{aligned}
 &\|P_{>N}u\|_{L_t^{\frac{2p(2+\varepsilon)}{\varepsilon}}L_x^{\frac{dp(2+\varepsilon)}{4+\varepsilon}}(J\times\R^d)}\\
&\quad\lesssim\||\nabla|^{s_c}P_{>N}u\|_{L_t^{\frac{2p(2+\varepsilon)}{\varepsilon}}L_x^{\frac{2dp(2+\varepsilon)}{dp(2+\varepsilon)-2\varepsilon}}(J\times\R^d)}\lesssim N^{s_c-1}Z_I(J).
 \end{aligned}
\]
If $s_c< \frac{\sigma(d-2)}{d-3+4\sigma}$, we  apply \eqref{interpolation low power} directly to
$u$. The mass conservation and \eqref{mass independent smallness} provides that
\begin{align*}
  \|u\|_{L_t^{\frac{2p(2+\varepsilon)}{\varepsilon}}L_x^{\frac{dp(2+\varepsilon)}{4+\varepsilon}}(J\times\R^d)}\leq\eta^\theta(1+\|u(t_0)\|_{2})^{-(1-\theta)}\|u(t_0)\|_{2}^{1-\theta}\leq\eta^\theta.
\end{align*}
This proves \eqref{estimate of u} in both regions. 
Since $I_N$ is a contraction on $L_t^{\frac{2p(2+\varepsilon)}{\varepsilon}}L_x^{\frac{dp(2+\varepsilon)}{4+\varepsilon}}(J\times\R^d)$, the H\"older inequality together with the  Strichartz estimate and
Lemma~\ref{heat commutator} yield
\begin{align}
 Z_I(J)\lesssim{}&\|\nabla I_Nu(t_0)\|_2
 +\|u\|_{L_t^{\frac{2p(2+\varepsilon)}{\varepsilon}}L_x^{\frac{dp(2+\varepsilon)}{4+\varepsilon}}(J\times\R^d)}^pZ_I(J)\notag\\
 &+N^{-\delta}\|u\|_{L_t^{\frac{2p(2+\varepsilon)}{\varepsilon}}L_x^{\frac{dp(2+\varepsilon)}{4+\varepsilon}}(J\times\R^d)}^{p-\nu}Z_I(J)^{1+\nu}.
 \label{Z bootstrap}
\end{align}
The same estimates hold on every initial subinterval $J'=[t_0,t]\subseteq J$. Now, we set $L_0$ to be the constant of Strichartz estimate and the bootstrap estimate $Z_I(J)\leq 4C_0$. Taking $\eta$ to be small enough and then $N_0$ large which is depending on $d,p,s$ such that \eqref{estimate of u} makes the nonlinear terms in \eqref{Z bootstrap} at most $C_0$ in total. The bootstrap argument therefore gives $Z_I(J)\lesssim1$.  In particular,
\begin{equation}\label{local commutator consequence}
 \|\nabla\mathcal C_N\|_{L_t^{2}L_x^{\frac{2d}{d+2}}(J\times\R^d)}
 \lesssim N^{-\delta}.
\end{equation}

We also obtain, from $I_NF(u)=F(w)+\mathcal C_N$,
\[
\|\nabla I_NF(u)\|_{L_t^2L_x^{\frac{2d}{d+2}}(J\times\R^d)}
\lesssim1.
\]

We now compute the energy increment. For smooth solutions,
\eqref{modified equation} gives
\begin{equation}\label{clean energy identity}
 E(w(T))-E(w(t_0))
 =-\Im\int_{t_0}^{T}\!\int_{\R^d}
 \{\nabla w\cdot\nabla\overline{\mathcal C_N}
       +F(w)\overline{\mathcal C_N}\}\,dx\,dt,
\end{equation}
where $F(w)=I_NF(u)-\mathcal C_N$. Notice that 
$\Im\int|\mathcal C_N|^2=0$, the second term in
\eqref{clean energy identity} can be written as $I_NF(u)$.
Thus, by H\"older's inequality and Sobolev embedding, it follows 
\begin{equation}\label{energy Holder}
\begin{aligned}
 &|E(w(T))-E(w(t_0))|\\
 &\quad\lesssim Z_I(J)\|\nabla\mathcal C_N\|_{L_t^{2}L_x^{\frac{2d}{d+2}}(J\times\R^d)}\\
 &\qquad+\||\nabla|^{-1}I_NF(u)\|_{L_t^{2}L_x^{\frac{2d}{d-2}}(J\times\R^d)}
 \|\nabla\mathcal C_N\|_{L_t^{2}L_x^{\frac{2d}{d+2}}(J\times\R^d)}.
\end{aligned}
\end{equation}
The Hardy--Littlewood--Sobolev inequality  together
with the Riesz transforms, gives
\[
\||\nabla|^{-1}f\|_{L^{\frac{2d}{d-2}}}
=\||\nabla|^{-2}(|\nabla|f)\|_{L^{\frac{2d}{d-2}}}
\lesssim\|\nabla f\|_{L^{\frac{2d}{d+2}}}.
\]
Applying this estimate at each time and using the preceding bound yields
\begin{equation}\label{negative derivative bound}
 \||\nabla|^{-1}I_NF(u)\|_{L_t^2L_x^{\frac{2d}{d-2}}(J\times\R^d)}
 \lesssim
 \|\nabla I_NF(u)\|_{L_t^2L_x^{\frac{2d}{d+2}}(J\times\R^d)}\lesssim1.
\end{equation}
 Combining
\eqref{energy Holder}--\eqref{negative derivative bound} with
\eqref{local commutator consequence} gives
\[
 |E(w(T))-E(w(t_0))|\lesssim N^{-\delta}
\]
in every dimension $d\ge3$. The same argument applies with $T$
replaced by any $t\in J$, giving the supremum in
\eqref{new energy increment}.
This proves Proposition~\ref{energy increment scattering}.


\begin{thebibliography}{10}
\bibitem{Blowscatter}
J. Bourgain, \emph{Scattering in the energy space and below for 3D NLS}, Journal D'Analyse
Mathematique, \textbf{75} (1998), 267-297.





\bibitem{cwI}
T. Cazenave, F.B. Weissler, \emph{Critical nonlinear Schr\"odinger Equation}, Non. Anal. TMA \textbf{14}
(1990), 807--836.

\bibitem{cazenave:book}
T. Cazenave, \textit{Semilinear Schr\"odinger equations,} Courant
Lecture Notes in Mathematics, \textbf{10}, American Mathematical
Society, 2003.






\bibitem{ckstt:low1}
J. Colliander, M. Keel, G. Staffilani, H. Takaoka, T. Tao, \emph{Almost conservation laws and global
rough solutions to a nonlinear Schr\"odinger equation}, Math. Res. Lett. \textbf{9} (2002), 659--682.







\bibitem{ckstt:low}
J. Colliander, M. Keel, G. Staffilani, H. Takaoka, T. Tao, \emph{Global existence and scattering for rough solutions
of a nonlinear Schr\"odinger equation on $\R^3$}, Comm. Pure Appl. Math. \textbf{57} (2004), 987--1014.

\bibitem{CKSTT-DCDS}
J. Colliander, M. Keel, G. Staffilani, H. Takaoka, T. Tao,
\emph{Resonant decompositions and the $I$-method for the cubic nonlinear Schr\"odinger equation on $\R^2$}. 
Discrete Contin. Dyn. Syst. {\bf 21} (2008), no. 3, 665-685.


\bibitem{Dodson-DCDS}
B. Dodson, 
\emph{Global well-posedness and scattering for the defocusing, cubic nonlinear Schr\"odinger equation when n=3
 via a linear-nonlinear decomposition.}
Discrete Contin. Dyn. Syst. {\bf 33} (2013),  1905-1926.

\bibitem{Dodson-Camb}
B. Dodson, \emph{Global well-posedness and scattering for nonlinear Schrödinger equations with algebraic nonlinearity when d=2,3
 and $u_0$ is radial}. 
Camb. J. Math. {\bf 7} (2019),  283-318.


\bibitem{gv:scatter}
J. Ginibre, G. Velo, \emph{Scattering theory in the energy space for a class of nonlinear Schr\"odinger
equations}, J. Math. Pure. Appl. \textbf{64} (1985), 363--401.


\bibitem{KV-book}
R. Killip and M. Vi\c{s}an \emph{Nonlinear Schrödinger equations at critical regularity}, Evolution equations, Clay Math. Proc., Amer. Math. Soc., Providence, RI, {\bf 17} (2013), 325-437.


\bibitem{keel-tao}
M. Keel, T. Tao, \emph{Endpoint Strichartz Estimates}, Amer. Math. J. \textbf{120} (1998), 955--980.








\bibitem{MMSZheat}
Z.~Ma, C.~Miao, Y.~Song, and J.~Zheng,
\emph{Rough loglog blowup solutions to mass-critical NLS in higher
dimensions $d\ge3$}, preprint (2026).


\bibitem{Su}
Q. Su,
\emph{Global well-posedness and scattering for the defocusing, cubic NLS in $R^3$
.} Math. Res. Lett. {\bf 19} (2012), no. 2, 431-451.

\bibitem{SZ}
Q. Su and Z. Zhao, \emph{Global well-posedness and scattering for the three-dimensional defocusing cubic Schrödinger equation in $H^s$, $s>\frac12$}. arXiv: 2609.24830.



\bibitem{VZ07}
M. Vi\c{s}an and X. Zhang, \emph{On the blowup for the $L^2$-critical focusing nonlinear Schr\"odinger equation in higher dimensions below the energy class}, SIAM J. Math. Anal. {\bf 39} (2007), 34-56. 


\bibitem{VZ09}
M.~Vi\c{s}an and X.~Zhang,
\emph{Global well-posedness and scattering for a class of nonlinear
Schr\"odinger equations below the energy space},  Differ. Integr. Equations, {\bf 22} (2009), 99-124.
\end{thebibliography}
\end{document}